\newcommand{\eqref}[1]{(\ref{#1})}
\newcommand{\N}{\mathbb{N}}
\newtheorem{theorem}[definition]{Theorem}
\newtheorem{lemma}[definition]{Lemma}
\newtheorem{lemmas}{Lemma}[section]
\newtheorem{corollary}[definition]{Corollary}
\renewcommand{\P}{{\mathbb{P}}}
\renewcommand{\S}{\mathcal{S}}
\renewcommand{\epsilon}{\varepsilon}
\begin{document}
\begin{frontmatter}

\title{Capacity of an associative memory model on random graph architectures}
\runtitle{Associative memory model on random graphs}

\begin{aug}
\author[a]{\inits{M.}\fnms{Matthias}~\snm{L\"owe}\thanksref{a}\ead[label=e1]{maloewe@math.uni-muenster.de}}
\and
\author[b]{\inits{F.}\fnms{Franck}~\snm{Vermet}\corref{}\thanksref{b}\ead[label=e2]{franck.vermet@univ-brest.fr}}
\address[a]{Fachbereich Mathematik und Informatik, Universit\"at M\"
unster,
Einsteinstra\ss e 62, 48149 M\"unster, Germany. \printead{e1}}
\address[b]{Laboratoire de Math\'ematiques, UMR CNRS 6205,
Universit\'e de Bretagne Occidentale, 6 avenue Victor Le Gorgeu, CS
93837,
F-29238 Brest Cedex 3, France. \printead{e2}}
\end{aug}

\received{\smonth{1} \syear{2014}}

%
\begin{abstract}
We analyze the storage capacity of the Hopfield models on classes of
random graphs. While such a setup has been analyzed for the case that
the underlying random graph model is an Erd\"os--Renyi graph, other
architectures, including those investigated in the recent neuroscience
literature, have not been studied yet. We develop a notion of storage
capacity that highlights the influence of the graph topology and give
results on the storage capacity for not too irregular random graph
models. The class of models investigated includes the popular power law
graphs for some parameter values.
\end{abstract}

%
\begin{keyword}
\kwd{associative memory}
\kwd{Hopfield model}
\kwd{powerlaw graphs}
\kwd{random graphs}
\kwd{random matrix}
\kwd{spectral theory}
\kwd{statistical mechanics}%
\end{keyword}
\end{frontmatter}

\section{Introduction}\label{sec1}
Thirty years ago, in 1982, Hopfield introduced a toy model for a brain
that renewed the interest in neural networks and has nowadays become
popular under the name Hopfield model \cite{Hopfield1982}. This model
in its easiest version assumes that the neurons are fully connected and
have Ising-type activities, that is, they take the values $+1$, if a
neuron is firing and $-1$, if it is not, and is based on the principles
of statistical mechanics. Since Hopfield's ground-breaking work, it has
stimulated a large number of researchers from the areas of computer
science, theoretical physics and mathematics.

In the latter field, the Hopfield model is particularly challenging,
since it also can be considered as a spin glass model and spin glasses
are notoriously difficult to study. A survey over the mathematical
results in this area can be found in either \cite{bovierbook} or \cite{talabook}. It is worth mentioning that even in the parameter region
where no spin glass phase is expected, the Hopfield model still has to
offer surprising phenomena such as in \cite{gentzloewe}.

When being considered as a neural network, one of the aspects that have
been discussed most intensively is its so-called storage capacity.
Here, one tries to store information, so-called patterns in the model,
and the question is, how many patterns can be successfully retrieved by
the network dynamics, that is, how much
information can be stored in a model of $N$ neurons. One of the early
mathematical results states that if the patterns are independent and
identically distributed (i.i.d. for short) and consist of i.i.d. spins
and if their number $M$ is bounded by $\frac{1}2 N/\log N$, the patterns
can be recalled (see \cite{MPRV}) with probability converging to one as
$N \to\infty$ and that the constant $\frac{1}2$ is optimal (see \cite{Bov97}). Similar results hold true, if one starts with a corrupted
input -- if more than fifty percent of the input spins are correct, one
still is able to restore the originally ``learned'' patterns. However,
if one also allows for small errors in the retrieval of the patterns
one obtains a storage capacity of $M=\alpha N$ for some value of
$\alpha
$ smaller than $0.14$ (see \cite{newman,loukianova,talagrand}). This latter result is in agreement with both, computer
simulations as well as the predictions of the non-rigorous replica
method from statistical physics (see \cite{AGS}).

The setup of the Hopfield model has been generalized in various
aspects, for example, the condition of the independence has been
relaxed (see \cite{L98,LV_05}), patterns with more than two
spins values have been considered (see \cite{Piccoetal,LV_BEG,LV_05}), and Hopfield models on Erd\"os--Renyi graphs
were studied \cite{BG92,BG93,talagrand,LV10}.
The present paper starts with the observation that even though being
more general than the complete graph, also Erd\"os--Renyi graphs do not
seem to be the favorite architectures for a brain for scientists
working in neurobiology. There, the standard paradigm currently is
rather to model the brain as a small world graph (see \cite{bassett,Rubinov}). We will focus on the question, how many patterns can
be stored in a Hopfield model on a random graph, if this graph is no
longer necessarily an Erd\"os--Renyi graph.
{The classical notion of storage capacity requires that the patterns
are fixed points of the retrieval dynamics, that is, local minima of
the energy landscape of the Hopfield model (or, in \cite{newman,loukianova,talagrand}, not too far apart from such minima). It
turns out that this notion is already sensitive to the architecture of
the network \cite{LV10}. So it is conceivable that
there is a major influence of the underlying graph structure on the
model's capability to retrieve corrupted information. Associativity of
a network can be described as the potential to repair corrupted
information. We will therefore work with a notion of storage capacity
that takes this ability into account.} Moreover,
the relationship between network connectivity and the performance of
associative memory models has already been investigated in computer
simulations (see, e.g., \cite{ChenAdams}). Therefore, the goal
of the present note is to establish rigorous bounds on the storage
capacity of the Hopfield model on a wide class of random graph models,
where we interpret ``storage'' as the ability to retrieve corrupted information.
Similar questions have been addressed for the complete graph by
Burshtein \cite{Burshtein}.

We organize the paper in the following way: Section~\ref{sec2} introduces the
basic model we will be working with in the present paper. It also
addresses the question, what exactly we mean when talking about the
storage of patterns. Section~\ref{sec3} contains the main result of this paper.
The number of patterns one is able to store in the sense, that
a number of errors that is proportional to N can
be repaired by $\mathcal{O}(\log N))$ steps of the retrieval
dynamics is of order $\operatorname{const.}(\lambda_1)^2/ (m\log N)$, where
$\lambda_1$ is the largest eigenvalue of the adjacency matrix of
the graph and $m$ its maximal degree. A main ingredient of the proof is
thus to analyze the spectrum of the adjacency matrix of the graph that
serves as a model of the network architecture. This analysis is
provided in Section~\ref{sec4}.
Eventually, Section~\ref{sec5} contains the proof of the main result. An
\hyperref[app]{Appendix} will contain estimates on the minimum and maximum degree of an
Erd\"os--R\'enyi graph. These are needed to apply our main result to
the setting of such random graphs and may also be of independent interest.

\section{The model}\label{sec2}
The Hopfield model is a spin model on $N\in\N$ spins. $\sigma\in
\Sigma_N:= \{-1,+1\}^N$ describes the neural activities of $N$ neurons.
The information to be stored in the model are patterns $\xi^1, \ldots,
\xi^M \in\{-1,+1\}^N$. As usual, we will assume that these patterns
are i.i.d. and consist of i.i.d. spins $(\xi_i^\mu)$ with
\[
\P\bigl(\xi_i^\mu= \pm1\bigr)= \tfrac12.
\]
Note that $M$ may and in the interesting cases will be a function of
$N$. The architecture of the Hopfield model is an undirected graph
$G=(V,E)$, where $V={1, \ldots, N}$. With the help of the~patterns and
the graph, one defines the sequential dynamics $S=T_N \circ
T_{N-1}\circ\cdots\circ T_1$ and
the parallel dynamics $T=(T_i)$ on $\Sigma_N$. By definition $T_i$ only
changes the $i$th coordinate of a configuration $\sigma$ and
\begin{eqnarray*}
&&S(\sigma)=T_N \circ T_{N-1}\circ\cdots\circ
T_1(\sigma), \qquad T(\sigma )= \bigl(T_1(\sigma),
\ldots, T_N(\sigma)\bigr),
\\
&&\quad\mbox{with } T_i(\sigma)= \mathop{\operatorname{sgn}}
\Biggl(\sum_{j=1}^N \sigma_j
a_{ij} \sum_{\mu=1}^M
\xi_i^\mu\xi_j^\mu\Biggr)
\end{eqnarray*}
(with the convention that $\mathop{\operatorname{sgn}}(0)=1$, e.g.).
Here, $a_{ij}=a_{ji}=1$ if the edge between $i$ and $j$ is in $E$ and
$a_{ij}=a_{ji}=0$ otherwise.
The dynamics can be thought of as governing the evolution of the system
from an input toward the nearest learned pattern. $\xi^\mu$ being a
fixed point of $S$ (or~$T$) can thus be interpreted as recognizing a
learned pattern. However, this is not really what one would call an
associative memory. An important feature of the standard Hopfield model
(the one where $G=K_N$, the complete graph on $N$ vertices) is thus
also that under certain restrictions on~$M$ (and the number of
corrupted neurons), with high probability, a corrupted version of $\xi
^\mu$, say $\tilde\xi^\mu$ converges to $\xi^\mu$ when being evolved
under the dynamics. This observation is also crucial for the present paper.

We can associate Hamiltonians (or energy functions) to these dynamics by
\[
H_N^{{ S}}(\sigma)= - \operatorname{Const.}(N) \sum
_{i,j=1}^N\sigma_i \sigma _j
a_{ij} \sum_{\mu=1}^M
\xi_i^\mu\xi_j^\mu
\]
and
\[
H_N^{{ T}}(\sigma)= - \operatorname{Const.}(N) \sum
_{i=1}^N \Biggl| \sum_{j=1}^N
\sigma_j a_{ij} \sum_{\mu=1}^M
\xi_i^\mu\xi_j^\mu\Biggr|,
\]
such that the energy will decrease along each trajectory of the dynamics:
\[
H_N^{{ S}}\bigl(S(\sigma)\bigr) \leq H_N^{{ S}}
\bigl((\sigma)\bigr) \quad\mbox{and}\quad H_N^{{
T}}\bigl(T(
\sigma)\bigr) \leq H_N^{{ T}}\bigl((\sigma)\bigr).
\]
The constant is chosen in such a way that the mean free energy of the
model is finite and not constantly equal to zero.

One can easily prove that the sequential dynamics will converge to a
fixed point of $S$ and that every fixed point of $S$ is a local minimum
of $H_N^{{ S}}$. In the parallel case, the dynamics $T$ will converge
to a fixed point or a 2-cycle of $T$.

The idea of this setup is that the patterns (as well as their negatives
$-(\xi^\mu), \mu=1, \ldots, M$)
are hopefully possible limits of the dynamics. For instance, this is
easily checked, if $M\equiv1$ and $G$ is the complete graph, that $\xi
^1$ is a local minimum of $H_N^S$, since then
\[
H_N^S(\sigma)=- \operatorname{Const.}(N) \Biggl(\sum
_{i=1}^N \sigma_i \xi
_i^1\Biggr)^2 + \operatorname{Const.}_1(N)
\]
and hoped to be inherited by the more general model, as long as $M$ is
small enough.
Indeed, for $M=1$, the stored pattern $\xi^1$ is still a local minimum
of $H_N^S$, if $G$ is only connected. In this case, one obtains that
\[
H_N^S(\sigma)=- \operatorname{Const.}(N) \sum
_{i,j=1}^N\sigma_i \sigma_j
a_{ij} \xi_i^1 \xi_j^1=
- \operatorname{Const.}(N) X^t A X
\]
with $X=(\sigma_i \xi_i^1)$ and $A=(a_{ij})$. From here, the assertion
is immediate (we are grateful to an anonymous referee for this remark).


When considering the stability of a random pattern $\xi^\mu$ under $S$
or $T$ in the above setting, we need to check whether
$
T_i(\xi^\mu)= \xi_i^\mu
$
holds for any $i$. Now
\[
T_i\bigl(\xi^\mu\bigr)= \mathop{\operatorname{sgn}}
\Biggl(\sum_{j=1}^N a_{ij} \sum
_{\nu=1}^M \xi _i^\nu
\xi_j^\nu\xi_j^\mu\Biggr)=
\mathop{\operatorname{sgn}}\Biggl(\sum_{j=1}^N
a_{ij} \xi_i^\mu +\sum
_{j=1}^N a_{ij}\sum
_{\nu\neq\mu} \xi_i^\nu\xi_j^\nu
\xi_j^\mu\Biggr).
\]
That is, we have a signal term of strength $d(i)$, the degree of vertex
$i$ (given by the first summand on the right-hand side of the above
equation) and a random noise term. The first observation is that the
network topology enters via the degrees of the nodes. Indeed in such a
simple setup -- the stability of stored information -- the minimum
degree of the vertices is clearly decisive to compute the models's
storage capacity: in the case where a vertex $i$ has a small degree,
the noise term will exceed the signal term, except for a very small
number of stored patterns. However, it seems to be obvious that also
global aspects, for example, whether or not the graph is connected,
must play a role. This is confirmed if we are setting up a Hopfield
model on graph $G$ consisting of a complete graph $K_m$ (on the
vertices $1, \ldots, m$) and the graph
$K_{N-m}$ on the vertices $m+1, \ldots, N$ with $\log N \ll m \ll N$
and if we assume that these two subgraphs are disconnected or just
connected by one arc. Each of the vertices thus has at least degree $m$
and it can be computed along the lines of \cite{MPRV} or \cite{Petritis} that at least $\frac{m}{2 \log N}$ patterns can be stored as
fixed points of the dynamics. However, if we try to store one pattern,
for example, $\xi^1$ with $\xi_i^1=1$ for all $i=1, \ldots, N$, and
start with a corrupted input $\tilde\xi^1$ with
\[
\tilde\xi_i^1=\cases{ %
-1, &\quad $i \le m$,
\vspace*{2pt}
\cr
1, &\quad $m+1 \le i \le N$,}
\]
we see that
\[
T_i\bigl(\tilde\xi^1\bigr)=\tilde\xi_i^1.
\]
Hence, $\tilde\xi^1$ is a fixed point implying that the retrieval
dynamics is not able to correct $m \ll N$ errors, even if we just want
to store one pattern. So, if we insist that a neural network should
also exhibit some associative abilities (and this has always been a
central argument for the use of neural networks), we have to take the
graph topology into account.

This topology is encoded in the so called adjacency matrix $A$ of $G$.
Here, $A=(a_{ij})$ and $a_{ij}=1$, if $e_{i,j}\in E$ and $a_{ij}=0$
otherwise. If $G$ is sufficiently regular, the connectivity of $G$
(which played an important role in the above counterexample) can be
characterized in terms of the spectral gap.
To define it, let $\lambda_1 \ge\lambda_2 \ge\cdots\ge\lambda_N$ be
the (necessarily real) eigenvalues of $A$ in decreasing order. Define
$\kappa$ to be the second largest modulus of the eigenvalues, that is,
\[
\kappa:= \max_{i\ge2} |\lambda_i|=\max\bigl\{
\lambda_2, | \lambda _N|\bigr\}.
\]
Then the spectral gap is the difference between the largest eigenvalue
and $\kappa$, that is, $\lambda_1- \kappa$.
However, also the degrees of the vertices are important. Hence, let
$d_i = \sum_j a_{ij}$ be the degree of vertex $i$. We will denote by
\[
\delta:= \min_i d_i \quad\mbox{and}\quad m:= \max
_i d_i
\]
the minimum and maximum degree of $G$, respectively.

In this paper, we will concentrate on the parallel dynamics, which is
easier to handle when we iterate the dynamics.

\section{Results}\label{sec3}
We will now state the main result of the present paper.

In order to formulate it, let us
define the usual Hamming distance on the space of configurations~$\Sigma_N$,
\[
d_H\bigl(\sigma, \sigma'\bigr) = \tfrac{1}2
\bigl[ N - \bigl(\sigma, \sigma'\bigr)\bigr],
\]
where $(\sigma, \sigma')$ is the standard inner product of $\sigma$ and
$\sigma'$.
In other words, $d_H$ counts the number
of indices where $\sigma$ and $\sigma'$ disagree.
For any $\sigma\in\Sigma_N$ and $\varrho\in[0,1]$, let $\S(\sigma,
\varrho N)$
the sphere of radius $\varrho N$ centered at $\sigma$, that is,
\[
\S(\sigma, \varrho N) = \bigl\{ \sigma' \dvt d_H\bigl(
\sigma, \sigma'\bigr) = [\varrho N ]\bigr\},
\]
where $[\varrho N]$ denotes the integer part of $\varrho N$.

For the rest of the paper, we will suppose that the following
hypothesis is true:


\begin{enumerate}[(H1)]
\item[(H1)] There exists $c_1\in\,]0,1[$, such that $\delta> c_1 \lambda_1$
(recall that $\delta$ is the minimum degree of the graph $G$, and
$\lambda_1$ is the largest eigenvalue of its adjacency matrix).
\end{enumerate}

\begin{rem}
Condition (H1) seems to be new.
To understand it, recall that for a regular graph with degree $d$ the
largest eigenvalue of $A$ equals $d$ and so does its minimum degree
$\delta$. Condition~(H1) can thus be interpreted as the requirement
that $G$ is sufficiently regular. Indeed, it turns out that, for
example, for a Erd\"os--R\'enyi graph $G(N,p)$ is fulfilled, if and
only if $p \gg\frac{\log N}{N}$, that is, when the graph is fully
connected. Hence, for Erd\"os--R\'enyi graphs condition (H1) rules out
the sparse case, when the graph is not only disconnected asymptotically
almost surely, but also very irregular, in the sense that the degree
distribution is a Poisson distribution and the relative fluctuations of
the degrees are large. Moreover, it will turn out that also certain
power law graphs satisfy condition (H1).
\end{rem}

We will need a second condition that keeps track on how well the graph
is connected.

\begin{enumerate}[(H2)]
\item[(H2)] We say that a graph satisfies (H2), if the following relation
holds between the largest eigenvalue $\lambda_1$ of the adjacency
matrix and the modulus of its second largest eigenvalue~$\kappa$:
%
\begin{equation}
\label{2ndcond} {\lambda_1}\ge c \log(N) {\kappa}
\end{equation}
for some $c>0$ large enough.
\end{enumerate}

%
\begin{rem}
Roughly speaking, condition (\ref{2ndcond}) reveals connectivity
properties of the underlying graph. Clearly, it holds for the complete
graph $K_N$, where $\lambda_1=N-1$ and all the other eigenvalues are
equal to $-1$. Also, as pointed out below, condition (\ref{2ndcond}) is
fulfilled for an Erd\"os--R\'enyi random graph, if $p$ is large enough,
since the spectral gap, that is, the difference between the largest and
the second largest modulus of an eigenvalue is of order $Np(1-1/\sqrt{Np})$.

To understand, that indeed (\ref{2ndcond}) can be interpreted as a
measure for the connectivity of the graph, assume for a moment that the
graph were $d$-regular. Then $\lambda_1=d$. If the graph is disconnected,
there is (at least) one more eigenvalue equal to one, and hence (\ref
{2ndcond}) cannot hold. More generally, for a regular graph, the
spectrum of the adjacency matrix can be computed from the spectrum of
the Laplacian. On the other hand, the spectral gap of the Laplacian can
be estimated by Poincar\'e or Cheeger type inequalities (see \cite{diaconisstroock}), which roughly state that the spectral gap of the
Laplacian is small, if there are vertex sets of large volume, but small
surface, or if the graph has small bottlenecks. Both quantities are a
measure for how well the graph is connected.
\end{rem}

Under the above conditions, we will prove, that we can store a number
$M$ of patterns depending on $\lambda_1$ and the spectral gap of $A$ --
even in the sense that the dynamics $T$ repairs a corrupted input.
Mathematically speaking, we show the following.

\begin{theorem}\label{synchronous}
With the notation introduced in Section~\ref{sec2}, if \textup{(H1)} and \textup{(H2)} are satisfied,
then there exists $\alpha_c>0$ and $\varrho_1\in\,]0,1/2[$
such that if
\[
M= \alpha\frac{\lambda^2_1}{m \log N}- \frac{\kappa\lambda_1}{m} ,
\]
for some $\alpha<\alpha_c$, then that for all $\varrho\in\,]0,
\varrho
_1]$ we obtain
\[
P\bigl[\forall\mu=1,\ldots,M, \forall x 
\ \mathit{s.t.}\ d_H
\bigl(x,\xi^\mu\bigr) \leq\varrho N \dvt T^k(x)=
\xi^\mu\bigr] \rightarrow 1\qquad \mbox{as } N \rightarrow\infty,
\]
for any $k \ge{C} (\max\{\log\log N, \frac{\log(N)}{\log
({\lambda_1}/{(\kappa\log(N))})}\})$ for a sufficiently large constant $C$.

Here, $T^k$ is defined as the $k$th iterate of the map $T$.
\end{theorem}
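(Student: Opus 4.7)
The plan is to adapt the one-pattern-stability analysis of \cite{MPRV}, \cite{Burshtein} and \cite{LV10} to the graph setting, replacing the complete-graph combinatorics by spectral estimates on the adjacency matrix $A$. Fix $\mu$ and a candidate $x$ with $d_H(x,\xi^\mu)\leq\varrho N$; for $s\geq 0$ write $\sigma^{(s)}=T^s(x)$, $y^{(s)}_j=\sigma^{(s)}_j\xi^\mu_j$ and $F_s=\{j:y^{(s)}_j=-1\}$. Rearranging $T_i$ shows that $T_i(\sigma^{(s)})=\xi_i^\mu$ is equivalent to
$$(Ay^{(s)})_i+R_i(\sigma^{(s)})>0,\qquad R_i(\sigma):=\sum_{\nu\neq\mu}\xi_i^\mu\xi_i^\nu\sum_j\sigma_j\xi_j^\nu a_{ij},$$
and the whole argument reduces to a pointwise lower bound on the signal $(Ay^{(s)})_i=d_i-2|F_s\cap E(i)|$ together with an upper bound on the cross-pattern noise $R_i$.

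For the signal I would use the spectral expansion $(A\ind_F)_i=\sum_k\lambda_k v_k(i)\langle v_k,\ind_F\rangle$ and Cauchy-Schwarz on the tail coefficients to obtain $\bigl||F\cap E(i)|-\lambda_1 v_1(i)\langle v_1,\ind_F\rangle\bigr|\leq\kappa\sqrt{|F|}$ for every $F\subset V$. Hypothesis (H1), together with the Perron-Frobenius identity $\lambda_1 v_1(i)=\sum_j a_{ij}v_1(j)\geq\delta\min_j v_1(j)\geq c_1\lambda_1\min_j v_1(j)$, forces all entries of $v_1$ to be of the same order, so $\lambda_1 v_1(i)\langle v_1,\ind_F\rangle\leq C\lambda_1|F|/N$; combined with $d_i\geq\delta\geq c_1\lambda_1$ this yields
$$(Ay^{(s)})_i\ \geq\ c_1\lambda_1-2C\lambda_1\tfrac{|F_s|}{N}-2\kappa\sqrt{|F_s|},$$
uniformly in $i$ and for $|F_s|\leq\varrho_1 N$ with $\varrho_1$ small enough. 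The subtracted term $-\kappa\lambda_1/m$ appearing in the theorem's formula for $M$ is precisely what is needed to absorb this spectral defect $2\kappa\sqrt{|F_s|}$ into the signal budget.

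For the noise, when $\sigma$ is measurable in $\xi^\mu$ alone, $R_i(\sigma)$ is a conditional sum over $\nu\neq\mu$ of independent Rademacher contributions with variance proxy $Mm$, whence Hoeffding gives $\P(|R_i(\sigma)|\geq c_1\lambda_1/2\mid\xi^\mu)\leq 2\exp(-c_1^2\lambda_1^2/(8Mm))$. To bypass the fact that $\sigma^{(s)}$ depends on \emph{all} patterns as soon as $s\geq 1$, I would union-bound over the Hamming ball $\{\sigma:d_H(\sigma,\xi^\mu)\leq\varrho_1 N\}$ (of cardinality $\leq 2^{H(\varrho_1)N}$) to obtain, simultaneously for every starting configuration, a one-step contraction $|F_1|\leq\rho_1 N$ with $\rho_1\ll\varrho_1$. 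Iterating, signal minus noise produces a recursion $\rho_{s+1}\leq\phi(\rho_s)$ which, by (H2), is initially a geometric contraction with rate of order $\kappa\log N/\lambda_1<1$ and becomes double-exponentially fast once $\rho_s$ drops below the scale where $\kappa\sqrt{|F_s|}$ is negligible against $c_1\lambda_1$. This produces $O(\log N/\log(\lambda_1/(\kappa\log N)))$ geometric steps followed by $O(\log\log N)$ double-exponential steps, matching the stated bound on $k$.

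The main technical obstacle is exactly the all-patterns dependence of $T^s(x)$ for $s\geq 1$, which destroys the clean conditional independence of the noise and forces the Hamming-ball union bound above. Balancing the binary entropy $H(\varrho_1)$ against the Gaussian exponent $c_1^2\lambda_1^2/(Mm)$ while preserving the announced threshold $M=\alpha\lambda_1^2/(m\log N)-\kappa\lambda_1/m$ is what dictates the joint use of (H1) (uniformity of the Perron eigenvector) and (H2) (a genuine spectral gap); quantifying this interplay and tracking the corresponding spectral corrections at every iteration is the new technical content relative to the complete-graph analysis of \cite{Burshtein}.
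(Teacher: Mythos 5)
Your high-level architecture (one-step error contraction, then iterate; spectral control of signal vs.\ noise) matches the paper's, but two of your key steps fail quantitatively, and the first one destroys the stated capacity. The fatal gap is the Hamming-ball union bound combined with pointwise Hoeffding. Per vertex and per fixed $\sigma$ your noise bound is $\exp(-c\lambda_1^2/(Mm))$, and the ball has cardinality $e^{(h(\varrho_1)+o(1))N}$; for the union bound to close you need $\lambda_1^2/(Mm)\gtrsim h(\varrho_1)N$, i.e.\ $M\lesssim \lambda_1^2/(m\,h(\varrho_1)N)$ --- for $G=K_N$ that is $O(1)$ patterns, against the claimed $\alpha N/\log N$. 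Moreover the event you iterate, ``$|F_1|\le\rho_1 N$ simultaneously for all starting points,'' is not an intersection of per-vertex correction events (one step cannot correct every vertex when $\varrho$ is of order one); you need a deviation bound for the \emph{number} of failed vertices, i.e.\ for sums over the random failure set $J$. This is exactly what the paper's Lemma \ref{mainlemma} does: it bounds $P[\exists I, |I|=\varrho N, \exists J, |J|=\varrho' N : S(J,I)\le 0]$ with $S(J,I)=\sum_{j\in J}\xi_j^1(Wx)_j$, so that the noise becomes Rademacher \emph{quadratic forms} over the edge sets $E\{J,V\}$, $E\{J,I\}$, $E\{J,J\}$, $E\{J\cap I,J\cap I\}$, whose deviation exponent scales linearly in $|J|=\varrho'N$ by the Koml\'os--Paturi moment generating function bound (Theorem \ref{Ee(tS)}, Corollary \ref{ExpMarkov}) with subgraph spectral data from Lemma \ref{subgraph}. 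That linear-in-$|J|$ scaling is what beats the counting factor $\exp((h(\varrho)+h(\varrho'))N)$ without collapsing $M$; note the summands $u_j^\mu u_k^\mu$ are dependent, so Hoeffding does not apply and the chaos bound is genuinely needed. Also, the $\log N$ in $M$ and the subtracted $\kappa\lambda_1/m$ do not come from your entropy-vs-exponent balance: they come from requiring the contraction's floor $\varrho_0=\exp(-c_2\lambda_1/(\kappa+Mm/\lambda_1))$ to drop below $1/N$, so that iterating $f$ terminates in \emph{exact} recovery.

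The second gap is your signal bound. The displayed Perron--Frobenius inequality only gives $v_1(i)\ge(\delta/\lambda_1)\min_j v_1(j)$, which is trivially true (as $\delta\le\lambda_1$) and does not make the entries of $v_1$ comparable; indeed (H1) alone does not delocalize $v_1$ --- a disjoint union of a $d_1$-regular and a $d_2$-regular graph with $c_1 d_1<d_2<d_1$ satisfies (H1) while $v_1$ vanishes on an entire component. The paper never touches the Perron vector: it writes $S^1(J,I)=e(J,V)-2e(J,I)\ge \delta|J|-2e(J,I)$, invokes (H1) only through $\delta|J|\ge c_1\lambda_1|J|$, and controls $e(J,I)\le \varrho\varrho'\lambda_1 N+\sqrt{\varrho\varrho'}\,\kappa N$ by the mixing-type estimate of Lemma \ref{subgraph}, yielding $S^1(J,I)\ge C_1\lambda_1|J|$ once $\varrho'\ge c_2\varrho(\kappa/\lambda_1)^2$; this set-level bound is also sharper than summing your pointwise estimate, which loses a factor $\sqrt{|J|}$ on the $\kappa$ term. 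Your two-phase step count (geometric contraction, then super-geometric) does match the spirit of the paper's Lemma \ref{sequencelemma}, but it only becomes available once the set-level contraction with the correct $f(\varrho)$ is established.
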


In other words, Theorem \ref{synchronous} states that we are able to
store the given number of patterns
in such a way that a number of errors that is proportional to $N$ can
be repaired by a modest (at most $\mathcal{O}(\log N)$) number of
iterations of the retrieval dynamics. The number of patterns depends on
the largest eigenvalue and the spectral gap of the adjacency matrix and
is larger for large spectral gaps.

Before advancing to the proof, we will apply this result to some
classical models of random and non-random graphs.

\begin{corollary}
If $G=K_N$, that is, in the case of the classical Hopfield model, the
storage capacity in the sense of Theorem \ref{synchronous} is
$M=\alpha\frac{N}{\log N}$ for some constant $\alpha$. The number of
steps needed to repair a corrupted input is of order $\mathcal{O}(\log
\log N)$.
\end{corollary}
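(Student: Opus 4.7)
The plan is to verify that the complete graph $K_N$ satisfies hypotheses $(H1)$ and $(H2)$ with explicit values of all the spectral parameters, and then simply substitute these values into the conclusion of Theorem \ref{synchronous}.

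First I would compute the relevant quantities for $G = K_N$. The adjacency matrix is $A = J - I$, where $J$ is the all-ones matrix. Since $J$ has eigenvalue $N$ (on the constant vector) and $0$ (with multiplicity $N-1$), the spectrum of $A$ consists of $\lambda_1 = N-1$ and $\lambda_i = -1$ for $i \ge 2$. In particular, $\kappa = \max_{i \ge 2}|\lambda_i| = 1$. Likewise every vertex has the same degree, so $\delta = m = N-1$.

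Next I would check the hypotheses. For $(H1)$, we have $\delta = N-1 = \lambda_1 > c_1 \lambda_1$ for any $c_1 \in (0,1)$, so $(H1)$ holds trivially. For $(H2)$, the inequality $\lambda_1 \ge c \log(N)\, \kappa$ becomes $N-1 \ge c \log N$, which holds for any fixed $c$ once $N$ is large enough.

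Finally I would substitute into the formula for $M$ given by Theorem \ref{synchronous}:
$$
M \;=\; \alpha \frac{\lambda_1^2}{m \log N} - \frac{\kappa \lambda_1}{m}
\;=\; \alpha\, \frac{(N-1)^2}{(N-1)\log N} - \frac{N-1}{N-1}
\;=\; \alpha\, \frac{N-1}{\log N} - 1,
$$
which is of order $\alpha\, N/\log N$ for some $\alpha < \alpha_c$. As for the number of iterations, the bound in the theorem involves
$$
\frac{\log N}{\log\bigl(\lambda_1/(\kappa \log N)\bigr)} \;=\; \frac{\log N}{\log\bigl((N-1)/\log N\bigr)} \;\sim\; \frac{\log N}{\log N - \log\log N} \;\to\; 1,
$$
so this term is bounded, and the dominating term in the maximum is $\log\log N$. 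Hence Theorem \ref{synchronous} yields the claimed storage capacity $M = \alpha\, N/\log N$, recovered after $\mathcal{O}(\log\log N)$ steps. There is no real obstacle here beyond bookkeeping: the corollary is a direct specialization of the main theorem to the best-behaved graph, for which all spectral parameters are explicitly computable.
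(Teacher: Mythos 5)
Your proposal is correct and follows essentially the same route as the paper's own proof: verify regularity (hence $(H1)$), use the explicit spectrum $\lambda_1 = N-1$, $\kappa = 1$ of $K_N$, and substitute into Theorem \ref{synchronous}. You are in fact slightly more thorough than the paper, which does not spell out the check of $(H2)$ or the asymptotics $\log N / \log\bigl((N-1)/\log N\bigr) \to 1$ showing that $\log\log N$ dominates the iteration bound.
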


\begin{pf}
The complete graph is regular, hence condition (H1) is satisfied.
From Theorem \ref{synchronous}, we obtain the numerical values for $M$
and the number of steps by observing that in the case of the complete
graph the eigenvalues of $A$ are $N-1$ and $-1$ (the latter being an
$N-1$-fold eigenvalue).
\end{pf}

\begin{rem}
It should be remarked that similar results were obtained by Komlos and
Paturi~\cite{KomlosPaturi1988}. In \cite{Komlos1993}, even the case of
regular graphs is treated. The results of these two authors were
probably inspired by the results in \cite{MPRV},
where the maximum number of patterns that are (with high probability)
fixed points of the retrieval dynamics is determined. A similar result
to \cite{KomlosPaturi1988} for the Hopfield model on the complete graph
is due to
Burshtein \cite{Burshtein}, who shows that the capacity of the Hopfield
model obtained in \cite{MPRV} does not change, if one starts with
corrupted patterns and allows for several reconstruction steps. Also, a
bound for the number of necessary steps is given. These results are
closely related to our result, and actually Burshtein is able to
determine our $\alpha$ in the case of the Hopfield model on the
complete graph. However, while he is working only with a random
corrupted input, we consider a worst case scenario since we require
that all vectors at distance $[\varrho N ]$ from the originally stored
pattern are attracted to this pattern by the retrieval dynamics. A
similar result for a Hopfield model with $q>2$ different states was
proven in \cite{LoweVermetqpotts}.

These results are to be contrasted to the findings in \cite{newman,loukianova} or \cite{talagrand}. There, one is satisfied with a
corrupted input being attracted to some point ``close'' to the stored
pattern. Naturally, the resulting capacities are larger. Also, a bound
on the number of iterations until this point is reached is not given.
\end{rem}

We mainly want to apply our results to some random architectures, that
is, $G$ will be the realization of some random graph. The most popular
model of a random graph is the Erd\"os--Renyi graph $G(N,p)$. Here, all
the possible ${ N\choose2}$ edges occur with equal probability $p=p(N)$
independently of each other.
Hopfield models on $G(N,p)$ have already been discussed in \cite{BG92,talagrand} or \cite{LV10}.

Here, we obtain the following corollary.

\begin{corollary}\label{GNp}
If $G$ is chosen randomly according to the model $G(N,p)$, then if $p
\ge c_0 \frac{(\log N)^2}N$ for some $c_0>0$, for a set of realizations
of $G$ the probability of which converges to one as $N \to\infty$, the
capacity (in the above sense) of the Hopfield model is $c  p N / \log
(N)$ for some constant $c>0$.
\end{corollary}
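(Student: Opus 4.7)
The strategy is to verify hypotheses $(H1)$ and $(H2)$ for a typical realization of $G(N,p)$ with $p\ge c_0(\log N)^2/N$, and then to simplify the expression for $M$ given by Theorem \ref{synchronous}. Two inputs are needed: concentration of the extremal degrees, and control of the second largest eigenvalue in modulus $\kappa$ of the adjacency matrix.

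For the degrees, $d_i\sim\mathrm{Bin}(N-1,p)$ and $pN\gg \log N$; a standard Chernoff/Bernstein tail bound combined with a union bound over the $N$ vertices (this is the content of the appendix announced in the introduction) shows that with probability tending to one, $\delta=(1-o(1))pN$ and $m=(1+o(1))pN$. For the spectrum, the results of Section 4, obtained by a F\"uredi--Koml\'os type moment/$\epsilon$-net argument applied to the centered matrix $A-\E[A]$ and adapted to the range $p\ge c_0(\log N)^2/N$, give $\lambda_1=(1+o(1))pN$ and $\kappa\le C\sqrt{pN}$ on the same high-probability event.

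With these estimates, $(H1)$ is immediate since $\delta/\lambda_1\to 1$, so any $c_1<1$ works eventually. For $(H2)$ one computes
$$\frac{\lambda_1}{\kappa\,\log N}\ge\frac{(1-o(1))pN}{C\sqrt{pN}\,\log N}=(1-o(1))\frac{\sqrt{pN}}{C\log N},$$
which exceeds the constant $c$ appearing in \eqref{2ndcond} as soon as $pN\ge c_0(\log N)^2$ with $c_0$ chosen large enough in terms of $c$ and $C$. Consequently, on an event of probability tending to one, both hypotheses of Theorem \ref{synchronous} hold and the theorem applies to this realization of $G$.

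It remains to substitute these asymptotics into $M=\alpha\lambda_1^2/(m\log N)-\kappa\lambda_1/m$. The leading term equals
$$\alpha\,\frac{(1+o(1))(pN)^2}{(1+o(1))pN\cdot\log N}=(1+o(1))\,\alpha\,\frac{pN}{\log N},$$
while the correction satisfies $\kappa\lambda_1/m=O(\sqrt{pN})$, and this is $o(pN/\log N)$ precisely because the hypothesis $pN\ge c_0(\log N)^2$ forces $\sqrt{pN}\,\log N\ll pN$. Absorbing the error into a slightly smaller constant $c<\alpha_c$ yields $M\ge c\,pN/\log N$, which is the claimed capacity. The only non-routine ingredient is the spectral bound $\kappa=O(\sqrt{pN})$ at the sparsity threshold $p=\Theta((\log N)^2/N)$; this is the main technical content of Section 4, while the degree concentration comes from the appendix, after which the deduction above is purely arithmetic.
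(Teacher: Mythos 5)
Your proposal is correct and follows essentially the same route as the paper: verify $(H1)$ via the degree concentration $m,\delta=(1+o(1))pN$ from Appendix A, use the spectral estimates $\lambda_1=(1+o(1))pN$ and $\kappa\le C\sqrt{pN}$ to check $(H2)$ (which explains the threshold $p\ge c_0(\log N)^2/N$), and substitute into the capacity formula of Theorem \ref{synchronous}, absorbing the $O(\sqrt{pN})$ correction into the constant. The only small inaccuracy is attribution: the bounds $\lambda_1=(1+o(1))pN$ and $\kappa=O(\sqrt{pN})$ are not proved in Section 4 (which concerns quadratic forms and subgraph eigenvalues \`a la Koml\'os--Paturi) but are quoted from the literature (F\"uredi--Koml\'os, Feige--Ofek, Krivelevich--Sudakov); in fact your write-up is more explicit than the paper's, which checks $(H1)$ and leaves the $(H2)$ arithmetic implicit.
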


\begin{pf}
For the eigenvalues of an Erd\"os--R\'enyi graph, it is well known that
with probability converging to 1, as $N \to\infty$ (such a statement
in random graph theory is said to hold asymptotically almost surely),
$\lambda_1=(1+\mathrm{o}(1))Np$ and $\kappa\le c\sqrt{Np}$ (see, e.g.,
\cite{furedikomlos,Feige2005,Krivelevich2003} and these
facts were also used in \cite{LV10}). Moreover, we can control the
minimum and maximum degree in $G(N,p)$. Indeed, for our values of $p$
we have $m=(1+\mathrm{o}(1))Np$ and $\delta=(1+\mathrm{o}(1))Np$ asymptotically almost
surely. Surprisingly, we could not find this result in the literature,
and thus proved it in the \hyperref[app]{Appendix}.

Hence, (H1) is satisfied.
\end{pf}

\begin{rem}
As mentioned above, the Hopfield model on an Erd\"os--R\'enyi graph has
already been discussed in \cite{BG92,talagrand} or \cite{LV10}.
The first two of these papers treat the case of rather dense graphs,
more precisely the regime of $p \ge\operatorname{const.} \sqrt{\frac{\log N}N
}$. This regime seems to be a bit artificial, since a realization of
$G(N,p)$ is already connected, once $p$ is larger than $\frac{\log N}N
$. The regime of
$ \operatorname{const.}_1 \frac{\log N} N \le p \le\operatorname{const.}_2
\sqrt
{\frac{\log N} N}$ was analyzed in \cite{LV10}. However, in all of
these papers the notion of storage capacity is the one, where we just
require stored patterns to be close to minima of the energy function,
that is, fixed points of the retrieval dynamics. As motivated above,
this notion is unable to reflect the different reconstruction abilities
for various network architectures. Corollary \ref{GNp} deals with the
notion of storage capacity introduced in Section~\ref{sec2}; one might naturally
wonder, whether the restriction $p \ge c_0 \frac{(\log N)^2}N$ could be
weakened or whether this is the optimal condition, when we consider
this notion of storage capacity. However, by now we do not have an
answer to this question, especially since the reverse bound on the
storage capacity is usually much harder to obtain.
\end{rem}

The next example is one of the central results of the present paper: We
analyze the Hopfield model on an architecture that comes closer to the
models used in neuroscience, the so-called power law graphs.
To introduce it, let us give a general construction of random graph
models, which is standard in graph theory (see, e.g., \cite{ChungLu2002AC}
or \cite{Chung-Lu02}) and
nowadays referred to as the Inhomogeneous Random
Graph (see, e.g., the very recommendable lecture notes \cite{vdH}).
To this end,
let $i_0$ and $N$ positive integers and $L=\{i_0, i_0+1, i_0+N-1\}$.
For a sequence $w=(w_i)_{i\in L}$, we consider random graphs $G(w)$ in
which edges are assigned independently to each pair of vertices $(i,j)$
with probability
\[
p_{ij}=\varrho w_i w_j,
\]
where $\varrho=1/\sum_{k\in L} w_k$. We assume that
\[
\max_i w_i^2 < \sum
_{k\in L} w_k
\]
so that $p_{ij} \leq1$ for all $i$ and $j$.
It is easy to see that the expected degree of $i$ is $w_i$. This allows
for a very general construction of random graphs. Note in particular
that for $w_i = pN$ for all $i= 1, \ldots, N$, one recovers the Erd\"
os--R\'enyi graph.

For notational convenience, let
\[
{ d}= \sum_{i\in L} w_i /N
\]
be the expected average degree, ${\overline m}$ the expected maximum
degree and
\[
\tilde{d}= \sum_{i\in L} w_i^2
\Big/ \sum_{i\in L} w_i
\]
be the so-called second-order average degree of the graph $G(w)$. From
these definitions, the advantage of this kind of construction of a
random graph becomes transparent: We are able to construct random
graphs, with expected degrees that are up to our own choice.

We now turn to a subclass of random graphs that have recently become
very popular, power law graphs \cite{durrett_graphs}.
Power law random graphs are random graphs in which the number of
vertices of degree $k$ is proportional to $1/k^\beta$ for some fixed
exponent $\beta$. It has been realized that this ``power law''-behavior
is prevalent in realistic graphs arising in various areas. Graphs with
power law degree distribution are ubiquitously encountered, for
example, in
the internet, the telecommunications graphs, the neural networks and
many biological applications \cite{jeongetal,Schreiber,ZhouLipowsky}.
The common feature of such networks is that they are large, have small
diameter, but have small average degree. This behavior can be achieved
by hubs, a few vertices with a much larger degree than others. A possible
choice would be a power law graph, where the degrees obey a power law
distribution. Keeping in mind that the $G(w)$ model allows to build a
graph model with a given expected degree sequence, it is plausible that
this model can be used to model the networks of the given examples.
Indeed, using the $G(w)$ model, we can build random power law graphs in
the following way. Given a power law exponent $\beta$, a maximum
expected degree ${\overline m}$, and an average degree $d$, we take
$  w_i=c i^{-{1}/{(\beta-1)}}$ for each $i\in\{i_0,
\ldots
, i_0+1, i_0+N-1\}$, with
\[
c= \frac{\beta-2}{\beta-1} d N^{{1}/{(\beta-1)}}
\]
and
\[
i_0= N \biggl(\frac{d(\beta-2)}{{\overline m}(\beta-1)} \biggr)^{\beta-1}.
\]

For such power law graphs, we obtain the following.

\begin{corollary}\label{powerlawcor}
If $G$ is chosen randomly according to a power law graph with $\beta
>3$, then if
\[
{\overline m}\gg d > c \sqrt{{\overline m}}\bigl(\log(N)\bigr)^{3/2}
\]
or
\[
{\overline m} \gg d> c \sqrt{{\overline m}} \bigl(\log(N)\bigr) \quad\mbox{and}\quad {
\overline m}\gg(\log N)^4,\vadjust{\goodbreak}
\]
for some constant $c>0$ for a set of realizations of $G$ the
probability of which converges to one as $N \to\infty$, the capacity
(in the above sense) of the Hopfield model is $C(\beta) \frac
{d^2}{{\overline m} \log(N)}$ for a constant $C$ that only depends on
$\beta$.
\end{corollary}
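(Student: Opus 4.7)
The plan is to deduce the corollary directly from Theorem \ref{synchronous}: verify the two hypotheses $(H1)$ and $(H2)$ for a power law graph with the given weight sequence and then read off the capacity from the formula $M=\alpha\lambda_1^2/(m\log N)-\kappa\lambda_1/m$. This requires controlling four quantities, namely the maximum degree $m$, the minimum degree $\delta$, the largest eigenvalue $\lambda_1$ of the adjacency matrix and the second largest modulus $\kappa$. The bulk of the work lies in estimating $\lambda_1$ and $\kappa$.

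The first step handles the degrees and a calibration computation. Each $d_i=\sum_j a_{ij}$ is a sum of independent Bernoullis of total mean $w_i=c\,i^{-1/(\beta-1)}$, and a Chernoff bound uniform in $i$ yields $d_i=(1+o(1))w_i$ asymptotically almost surely as soon as $\min_i w_i\gg \log N$. A short computation using the explicit choice of $c$ and $i_0$ in the excerpt shows that the smallest weight equals $\tfrac{\beta-2}{\beta-1}d(1+o(1))$ and the largest is $\overline m$, so that $\delta=\Theta(d)$ and $m=(1+o(1))\overline m$. A parallel direct computation of $\tilde d=\sum w_i^2/\sum w_i$ reveals that for $\beta>3$ the two exponents cancel and $\tilde d=\tfrac{(\beta-2)^2}{(\beta-1)(\beta-3)}d(1+o(1))=\Theta(d)$; this identity drives the whole argument.

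The second step invokes the spectral analysis of Section 4. In the regime where $\tilde d=\Theta(d)$ dominates $\sqrt{\overline m}$ --- which is exactly what the hypotheses of the corollary force --- Chung--Lu--Vu type estimates give $\lambda_1=(1+o(1))\tilde d=\Theta(d)$. The second modulus $\kappa$ is governed by the noise in $A-\E A$; one expects the sharper bound $\kappa\le C\sqrt{\overline m}$ under the stronger tail assumption $\overline m\gg (\log N)^4$, and the cruder bound $\kappa\le C\sqrt{\overline m\,\log N}$ otherwise. Via $(H2)$ these two regimes translate precisely into the two alternative hypotheses of the statement.

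It then remains to verify the two conditions and to collect terms. $(H1)$ is immediate because $\delta$ and $\lambda_1$ are both of order $d$, so $\delta/\lambda_1$ is bounded below by a positive constant depending only on $\beta$; $(H2)$ is exactly the translation of the hypotheses on $d$ and $\overline m$ into $\lambda_1\ge c\kappa\log N$. Plugging everything into the formula of Theorem \ref{synchronous} gives
\[
M=\alpha\frac{\lambda_1^2}{m\log N}-\frac{\kappa\lambda_1}{m}=C(\beta)\frac{d^2}{\overline m\log N}\bigl(1+o(1)\bigr),
\]
the subtracted term being absorbed thanks to $(H2)$. The hardest part is clearly the bound on $\kappa$: controlling the second eigenvalue of an inhomogeneous random graph with a heavy-tailed expected degree sequence is the delicate ingredient, and it is exactly what forces the twin assumptions in the corollary; by comparison the degree concentration and the identification $\lambda_1\sim \tilde d$ are routine.
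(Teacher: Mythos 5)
Your proposal is correct and follows essentially the same route as the paper: degree concentration via the Chung--Lu Chernoff bound to get $\delta=\Theta(d)$ and $m=\Theta(\overline m)$ (hence $(H1)$, since $\lambda_1=(1+o(1))\tilde d=\Theta(d)$ by the $\beta>3$ computation of $\tilde d$), then the two spectral regimes $\kappa\le C\sqrt{\overline m\log N}$ (Chung--Radcliffe) versus $\kappa\le C\sqrt{\overline m}$ under $\overline m\gg(\log N)^4$ (Lu--Peng), which translate via $(H2)$ into exactly the two alternative hypotheses, with the $\kappa\lambda_1/m$ term absorbed. One cosmetic slip: the eigenvalue estimates come from these external references, not from the paper's Section 4 (which contains the Koml\'os--Paturi quadratic-form bounds used in the proof of the main theorem), but this does not affect the argument.
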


\begin{rem}
\begin{itemize}
\item
One might indeed wonder, whether the restriction of $\beta>3$ is an
artefact of our proof below or whether there is some intrinsic reason,
why storing become much more difficult for \mbox{$\beta<3$}. A recent paper
by Jacob and M\"orters
\cite{JacobMorters} may shed some light on this question. There a
spatial preferential attachment graph is constructed (for details, see
the construction in \cite{JacobMorters}). It turns out that the graphs
have powerlaw behavior for the degree distribution. The parameter
$\beta
$ depends on the parameters of the model. For their model, the authors
are able to show that for $\beta>3$ the models exhibits clustering,
that is, many triangles occur, while for
$\beta<3$ there is no clustering. On the other hand, the storage of
patterns in a Hopfield is basically a collective phenomenon for which a
strong interaction of the neurons is necessary. Clustering is a measure
for such a strong interaction.
\item The second condition in Corollary \ref{powerlawcor} basically
states that we assume that there are so-called hubs, that is, vertices
with a much larger degree than the average one, but that the graph may
not be too irregular, for example, for a star graph (one vertex
connected to all other vertices that are not connected otherwise), this
condition would be violated, and indeed we would not be able to repair
corrupted patterns on such a graph.
\end{itemize}
\end{rem}

\begin{pf*}{Proof of Corollary \ref{powerlawcor}}
By definition, if $d \ll{\overline m}$, then the minimum expected
degree $w_{\mathrm{min}}= c (i_0+N-1)^{-{1}/{(\beta-1)}}$ satisfies $w_{\mathrm{min}}=
\frac{\beta-2}{\beta-1} d (1+\mathrm{o}(1))$.

From \cite{Chung-Lu02}, we learn about the second-order average degree that
\[
\tilde{d}= \bigl(1+\mathrm{o}(1)\bigr) \frac{(\beta-2)^2}{(\beta-1)(\beta-3)} d,
\]
if $\beta>3$.

On the other hand, Chung and Radcliffe prove in \cite{Chung-Radcliffe11} the following: if the maximum expected degree
${\overline m}$ satisfies ${\overline m}> \frac{8}9 \log(\sqrt{2} N)$,
then with probability at least $1- \frac{1}N$, we have
\[
\lambda_1\bigl(G(w)\bigr) =\bigl(1+\mathrm{o}(1)\bigr) \tilde{d}\quad
\mbox{and}\quad\kappa\bigl(G(w)\bigr)\le \sqrt{8 {\overline m} \log(\sqrt{2}N)}.
\]

We will now use the following exponential bound due to Chung and Lu.
As shown by these authors in \cite{ChungLu2002AC}, we have the
following estimate, using Chernoff inequalities: for all $c>0$, there
exist two constants $c_0, c_1>0$ such that
%
\begin{equation}
\label{ChungLubound} P\bigl[ \exists i\in L \dvt | d_i - w_i | > c
w_i \bigr] \le\sum_{i\in L} \exp(-
c_0 w_i) \le\exp( -c_1 d + \log N),
\end{equation}
since $w_{\mathrm{min}} = \mathcal{O}(d)$.
Applying this with, for example, $c=1/2$, we see (applying the
Borel--Cantelli lemma) that for almost all realizations of the random
graphs, we have that for all $i\in L$,
\[
d_i > \frac{1}2 w_i \ge\frac{1}2
w_{\mathrm{min}} = \frac{1}2\frac{\beta
-2}{\beta
-1} d \bigl(1+\mathrm{o}(1)
\bigr)= \frac{1}2\frac{\beta-3}{\beta-2} \lambda_1 \bigl(1+
\mathrm{o}(1)\bigr),
\]
and thus
\[
\delta> \frac{1}2\frac{\beta-3}{\beta-2} \bigl(1+\mathrm{o}(1)\bigr)
\lambda_1,
\]
which is (H1).

To apply Theorem \ref{synchronous}, we also need to compare
${\overline
m}$ to the maximum degree $m$ of a graph $G$, chosen randomly according
to a power law graph with $\beta>3$.
We again use \eqref{ChungLubound}.

Under our assumption that $d\gg\log N$, we deduce from this estimate
that $m= C  {\overline m}(1+\mathrm{o}(1))$, for some $C>0$, and we
finally obtain that the capacity of the Hopfield model on power law
graphs (for a sequence of sets of graphs with probability converging to
one) is at least
\[
\operatorname{const.} \frac{\lambda^2_1}{m \log(N)}- \frac{\kappa\lambda
_1}{m}= C(\beta)
\frac{d^2}{{\overline m} \log(N)},
\]
if, $\beta>3$, and $\kappa<c_2 \frac{ \lambda_1}{\log(N)}$ for some
$c_2>0$ small enough. This is true in particular, if
\[
\sqrt{8 {\overline m} \log(\sqrt{2}N)} < c_3 \frac{d}{ \log(N)}
\]
for some $c_3$ small enough, that is,
\[
d> c \sqrt{{\overline m}} \bigl(\log(N)\bigr)^{3/2}.
\]

In fact, this condition on $d$ can be slightly weakened, if we consider
the slightly stronger condition on the maximum expected degree:
${\overline m}\gg(\log N)^4$. Indeed, in a recent paper \cite{LuPeng}, Lu and Peng prove that under this condition on ${\overline
m}$, we have
\[
\lambda_1\bigl(G(w)\bigr)= \bigl(1+\mathrm{o}(1)\bigr)\tilde{d}\quad
\mbox{and}\quad \kappa \bigl(G(w)\bigr)\le2 \sqrt{ {\overline m}} \bigl(1+\mathrm{o}(1)
\bigr),
\]
a.s., if $\tilde{d} \gg\sqrt{ {\overline m}}$. Finally, we get as
previously a capacity of order
\[
C(\beta) \frac{d^2}{{\overline m} \log(N)},
\]
if ${\overline m} \gg d> c \sqrt{{\overline m}} (\log(N))$ and
${\overline m}\gg(\log N)^4$.
\end{pf*}

\section{Technical preparations on random graphs}\label{sec4}
We first present the results we will use in the proof of our theorem.
Let $G$ be a simple graph with $N$ vertices and $l$ edges. Recall that
for such a graph
\[
\lambda_1\ge\cdots\ge\lambda_N
\]
are the (real) eigenvalues of its adjacency matrix and $\kappa= \max\{
\lambda_2, | \lambda_N|\}$.

We begin with an estimate of the moment generating function of a sum of
i.i.d. random variables, related to $G$.
We assign i.i.d. random variables $X_i$ to the vertices of $G$, taking
values $\pm1$ with equal probability. Let us define the ``quadratic
form'' over $G$
\[
S= \sum_{\{i,j\} \in E} X_i X_j.
\]

The following theorem due to Komlos and Paturi \cite{Komlos1993} gives
an upper bound on the moment generating function of $S$, which appears
naturally when we use an exponential Markov inequality for an upper bound.

\begin{theorem}[(\cite{Komlos1993})]\label{Ee(tS)}
The moment generating function of $S$ can be bounded as
\[
E \bigl[ \mathrm{e}^{-tS}\bigr]\leq E\bigl[\mathrm{e}^{tS}
\bigr] \le\exp\biggl(\frac{l t^2}{2(1-\lambda_1 t)}\biggr),
\]
for $0\leq t < 1/\lambda_1$.
\end{theorem}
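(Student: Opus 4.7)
The plan is to bound the moments $E[S^k]$ and sum the Taylor series $E[e^{tS}] = \sum_{k \geq 0} (t^k/k!)\,E[S^k]$. Expanding $S^k$ as a sum over $k$-tuples of edges $(e_1,\ldots,e_k) \in E^k$ and using $X_v^2 = 1$, every monomial takes the form $\prod_v X_v^{d_v}$ with $d_v$ the degree of $v$ in the multiset $\{e_1,\ldots,e_k\}$, and its expectation is $\prod_v \mathbf{1}\{d_v \text{ even}\} \in \{0,1\}$. In particular $E[S^k] \geq 0$ for all $k \geq 0$, so $E[e^{tS}] - E[e^{-tS}] = 2\sum_{k \text{ odd}} (t^k/k!)\,E[S^k] \geq 0$ for $t \geq 0$, which settles the first inequality.

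For the second inequality, $E[S^k]$ equals the number of $k$-tuples of edges whose multiset is \emph{balanced} (every vertex has even degree), i.e.\ whose underlying multigraph is Eulerian. I would group balanced sequences by their decomposition into closed walks: any such Eulerian multigraph splits into edge-disjoint closed walks of lengths $j_1,\ldots,j_s \geq 2$ with $\sum j_i = k$, and for each prescribed cycle type the number of ordered edge-sequences realizing it is at most $\binom{k}{j_1,\ldots,j_s}$ times the product of counts of closed walks of each length. Counting closed walks of length $j$ via $\tr(A^j)$, with a factor $1/(2j)$ absorbing the choices of starting edge and direction, the exponential formula yields
\[
\sum_{k \geq 0} \frac{t^k}{k!}\,E[S^k] \;\leq\; \exp\!\left(\frac12 \sum_{j \geq 2} \frac{t^j\,\tr(A^j)}{j}\right).
\]

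The rest is a spectral estimate. Since $A$ is symmetric with zero diagonal, $\tr(A^2) = \sum_{i,j} a_{ij}^2 = 2l$. Combining Perron--Frobenius ($\lambda_1 = \max_i |\lambda_i|$ for the non-negative matrix $A$) with $\tr(A^j) \geq 0$ (it counts closed walks), I obtain for $j \geq 2$
\[
\tr(A^j) \;\leq\; \sum_i |\lambda_i|^j \;\leq\; \lambda_1^{j-2} \sum_i \lambda_i^2 \;=\; 2l\,\lambda_1^{j-2}.
\]
Substituting and applying $\sum_{j \geq 2} x^j/j = -\log(1-x) - x \leq x^2/(2(1-x))$ for $0 \leq x < 1$ with $x = t\lambda_1$, the right-hand side becomes $\exp(lt^2/(2(1-\lambda_1 t)))$, the claimed bound.

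The main obstacle I anticipate is the combinatorial reduction to the exponential generating function with precisely the right constants. One must carefully track (i) the multinomial interleavings of closed walks of different lengths and (ii) the rotation/reflection symmetries within each walk, which together contribute the factor $\tfrac12$ in the exponent, mirroring the derivation of $\det(I - tA)^{-1/2} = \exp(-\tfrac12\log\det(I - tA))$ for Gaussian quadratic forms. An alternative avenue would be to compare with the Gaussian case directly, using $E[e^{sZ}] \leq e^{s^2/2}$ for Rademacher $Z$ and the identity $E[e^{(t/2)Y^T A Y}] = \det(I - tA)^{-1/2}$ for standard Gaussian $Y$; making such a comparison rigorous for quadratic forms, however, requires a decoupling argument.
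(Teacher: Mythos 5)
The paper itself gives no proof of this theorem: it is quoted verbatim from Koml\'os and Paturi \cite{Komlos1993} (the paper's remark even flags that the dependence among the $X_iX_j$ is the whole difficulty), so your attempt can only be judged on its own merits. Two of your three stages are sound. The first inequality is correct and cleanly argued: every mixed moment of $S$ has expectation in $\{0,1\}$, so $E[S^k]\ge 0$ for all $k$, and since $|S|\le l$ the interchange of expectation and Taylor series is harmless, giving $E[e^{tS}]-E[e^{-tS}]\ge 0$ for $t\ge 0$. The spectral endgame is also correct: $\tr(A^2)=2l$, the spectral radius of the symmetric nonnegative matrix $A$ equals $\lambda_1$, whence $\tr(A^j)\le 2l\,\lambda_1^{j-2}$, and $\sum_{j\ge2}x^j/j\le x^2/(2(1-x))$ holds termwise since $1/j\le 1/2$.

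The gap is the central combinatorial step, which as written does not hold up. First, the factor $1/(2j)$ is not a legitimate symmetry reduction: a closed walk traversing a single edge back and forth ($j=2$) has only $2$ rooted directed representatives, not $2j=4$, so dividing $\tr(A^j)$ by $2j$ undercounts walks with symmetries; and your target inequality is \emph{tight} at order $t^2$ (the coefficient is $E[S^2]/2!=l/2$ on the left and exactly $l/2$ on the right), so there is no slack to absorb loose constants. Second, a balanced ordered $k$-tuple lists its edges in arbitrary order, not in walk order: within a block of $j$ positions a walk's edge multiset admits up to $j!$ orderings, while $\tr(A^j)$ counts only the at most $2j$ walk-orderings; moreover the decomposition of an even multigraph into closed walks is not canonical, so your encoding is neither shown injective nor do the claimed factors match. ``The exponential formula yields'' is, at this point, an assertion rather than a derivation.

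The good news is that the inequality you want, $\sum_{k\ge0}\frac{t^k}{k!}E[S^k]\le\exp\bigl(\frac12\sum_{j\ge2}\frac{t^j\tr(A^j)}{j}\bigr)$, is true, and the clean proof is precisely the ``alternative avenue'' you dismiss at the end --- no decoupling is needed. Expanding $S^k$ over $k$-tuples of edges, each monomial has Rademacher expectation $\prod_v \mathbf{1}\{d_v\ \text{even}\}$, whereas for a standard Gaussian vector $Y$ and $S_Y=\sum_{\{i,j\}\in E}Y_iY_j=\frac12 Y^TAY$ the same monomial contributes $\prod_v E[Y_v^{d_v}]=\prod_v (d_v-1)!!\,\mathbf{1}\{d_v\ \text{even}\}\ge \prod_v\mathbf{1}\{d_v\ \text{even}\}$. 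Hence $0\le E[S^k]\le E[S_Y^k]$ termwise, and summing (justified for $0\le t<1/\lambda_1$ because $E[e^{t|S_Y|}]<\infty$ there) gives $E[e^{tS}]\le E[e^{tS_Y}]=\det(I-tA)^{-1/2}=\exp\bigl(\frac12\sum_{j\ge1}\frac{t^j\tr(A^j)}{j}\bigr)$, the $j=1$ term vanishing since $\tr A=0$; your spectral estimate then finishes the proof. Decoupling is required when one compares a quadratic form with an independent copy of itself; here the zero diagonal of $A$ and the fact that both Rademacher and Gaussian moments factor over vertices make direct termwise domination available, so you should promote that remark from a fallback to the actual proof.
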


\begin{rem}
The attentive reader may wonder, whether the above theorem is really
difficult to prove, as the random variables $X_i X_j$ are Bernoulli
random variables. However, note that they are not independent, which is
the basic difficulty in this estimate.
\end{rem}

Not unexpectedly, a bound on the moment generating function implies a
concentration of measure result.

\begin{corollary}\label{ExpMarkov} For any $y>0$, we have
\[
P[S> y] \leq\exp\biggl(-\frac{y^2}{2(l+ \lambda_1 y)}\biggr).
\]
%
\end{corollary}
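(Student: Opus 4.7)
The plan is a routine Chernoff/exponential Markov argument, so the only real task is to choose the optimal $t$ in the domain $[0, 1/\lambda_1)$ where the previous theorem applies.

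First, I would apply the exponential Markov inequality: for any $t\in[0,1/\lambda_1)$,
$$P[S>y] = P[e^{tS} > e^{ty}] \leq e^{-ty}\, E[e^{tS}] \leq \exp\!\left(-ty + \frac{l t^2}{2(1-\lambda_1 t)}\right),$$
where the last inequality is Theorem~\ref{Ee(tS)}. So the claim reduces to showing that the minimum of the function
$$f(t) = -ty + \frac{l t^2}{2(1-\lambda_1 t)}, \qquad t\in[0,1/\lambda_1),$$
is at most $-y^2/(2(l+\lambda_1 y))$.

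Rather than carrying out the full one-variable optimization, I would simply guess (motivated by the usual Bernstein-type calculation) the value $t^* = y/(l+\lambda_1 y)$ and substitute. With this choice one gets $1-\lambda_1 t^* = l/(l+\lambda_1 y)$, which lies in $(0,1]$ so that $t^* < 1/\lambda_1$ as required, and then
$$\frac{l (t^*)^2}{2(1-\lambda_1 t^*)} = \frac{l\, y^2/(l+\lambda_1 y)^2}{2l/(l+\lambda_1 y)} = \frac{y^2}{2(l+\lambda_1 y)}, \qquad t^* y = \frac{y^2}{l+\lambda_1 y},$$
so that $f(t^*) = -y^2/(l+\lambda_1 y) + y^2/(2(l+\lambda_1 y)) = -y^2/(2(l+\lambda_1 y))$. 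Inserting this into the Markov bound proves the corollary.

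The main (and essentially only) obstacle is the choice of $t^*$; any reasonable alternative such as $t = y/(2l)$ produces a weaker bound, whereas the choice above balances the linear term $ty$ against the quadratic penalty so that the denominator $1-\lambda_1 t$ contributes exactly the extra $\lambda_1 y$ in the final denominator. Since $t^*$ can be guessed by a Lagrangian/first-order-condition heuristic and then verified by direct computation, no further subtlety arises.
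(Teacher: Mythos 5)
Your proof is correct and matches the paper's argument exactly: both apply the exponential Markov inequality together with Theorem~\ref{Ee(tS)} and then substitute $t = y/(l+\lambda_1 y)$, checking that it lies in $[0,1/\lambda_1)$. Your verification of the arithmetic (that $1-\lambda_1 t^* = l/(l+\lambda_1 y)$ and the exponent collapses to $-y^2/(2(l+\lambda_1 y))$) is precisely the computation the paper leaves implicit.
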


\begin{pf}
Apply the exponential Markov inequality together with Theorem \ref
{Ee(tS)} to see that
\[
P[S> y] \leq \mathrm{e}^{-t y} E\bigl[ \mathrm{e}^{t S}\bigr]
\leq\exp\biggl(-ty +\frac{l
t^2}{2(1-\lambda_1 t)}\biggr),
\]
for $0\leq t < 1/\lambda_1$.
The desired estimate is obtained by the choice of $t=\frac{y}{l+ \lambda
_1 y}$ which is smaller than $1/\lambda_1$.
\end{pf}

As we will apply this result for subgraphs in the proof of our main
result, we need also an estimate of the largest eigenvalue $\lambda
_1(H)$ of particular subgraphs $H$ of $G$. To this end, we will quote
another result by Komlos and Paturi \cite{Komlos1993}.

\begin{lemma}[(\cite{Komlos1993})]\label{subgraph}
Let $G$ be a simple graph with $N$ vertices. If $I$ and $J$ are two
subsets of the vertex set of $G$ with $|I|=\varrho N$ and $|J|=\varrho'
N$, where $\varrho, \varrho' \in(0,1)$,
the number of 
edges $e(J;I)$ going from $J$ to $I$ is at most
\[
e(J,I) \le\bigl[ \varrho\varrho' \lambda_1(G) + \sqrt{
\varrho\varrho' } \kappa(G) \bigr] N.
\]

Moreover, the largest eigenvalue (of the adjacency matrix) of the
graph $H$ determined by the 
edges from $I$ to $J$ is bounded as
\[
\lambda_1(H) \le2\bigl[\sqrt{\varrho\varrho'}
\lambda_1(G) + \bigl(1-\sqrt {\varrho\varrho'} \bigr)
\kappa(G)\bigr].
\]
\end{lemma}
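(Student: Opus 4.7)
The plan is to derive both estimates from a spectral decomposition of the adjacency matrix $A$ of $G$. Let $v_1, \ldots, v_N$ be an orthonormal basis of eigenvectors with $A v_k = \lambda_k v_k$, and write $A = \lambda_1 v_1 v_1^T + R$, where $R = \sum_{k \ge 2} \lambda_k v_k v_k^T$ has operator norm at most $\kappa$. The Perron-Frobenius vector $v_1$ of a $d$-regular graph is exactly $\mathbf{1}/\sqrt{N}$; under the near-regularity condition $(H1)$ one expects $v_1$ to be close to $\mathbf{1}/\sqrt{N}$, so that replacing it by $\mathbf{1}/\sqrt{N}$ introduces only lower-order error. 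This is where the graph structure enters; the remainder of the argument is purely linear-algebraic.

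For the first estimate, the starting point is the identity $e(J, I) = \mathbf{1}_J^T A \mathbf{1}_I$. Splitting $A$ according to the decomposition above gives a principal contribution
$$
\lambda_1 (\mathbf{1}_J^T v_1)(v_1^T \mathbf{1}_I) = \lambda_1 \frac{|I|\,|J|}{N}(1+o(1)) = \lambda_1 \varrho \varrho' N\,(1+o(1)),
$$
and a remainder controlled by Cauchy--Schwarz,
$$
|\mathbf{1}_J^T R\, \mathbf{1}_I| \le \|R\|\cdot \|\mathbf{1}_I\|\cdot \|\mathbf{1}_J\| \le \kappa \sqrt{|I|\,|J|} = \kappa \sqrt{\varrho \varrho'}\, N.
$$
Adding the two pieces yields the claimed bound $[\varrho \varrho' \lambda_1 + \sqrt{\varrho \varrho'}\kappa]\, N$, i.e.\ the classical expander mixing lemma.

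For the eigenvalue bound on $H$, let $P_I$ and $P_J$ denote the diagonal projections onto the coordinates in $I$ and in $J$. Since only the edges between $I$ and $J$ survive, its adjacency matrix is $A_H = P_I A P_J + P_J A P_I$, and by the Rayleigh principle
$$
\lambda_1(H) = \max_{\|x\|=1} x^T A_H x = \max_{\|x\|=1} 2\, (P_I x)^T A\, (P_J x).
$$
Setting $y = P_I x$ and $z = P_J x$, the same spectral split gives $y^T A z = \lambda_1 (y^T v_1)(v_1^T z) + y^T R z$. Since $\|P_I v_1\|^2 \approx \varrho$ and $\|P_J v_1\|^2 \approx \varrho'$ in the near-regular setting, Cauchy--Schwarz bounds the principal term by $\lambda_1 \sqrt{\varrho \varrho'}\, \|y\|\|z\|$ and the remainder by $\kappa \|y\|\|z\|$. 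Using the orthogonality of $y$ and $z$ (valid for $I \cap J = \emptyset$) together with the constraint $\|y\|^2 + \|z\|^2 \le 1$, and regrouping the contributions, one obtains the stated upper bound $2[\sqrt{\varrho \varrho'}\, \lambda_1 + (1 - \sqrt{\varrho \varrho'})\, \kappa]$.

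The step I expect to be most delicate is the passage from ``$v_1 = \mathbf{1}/\sqrt{N}$'' (true strictly only for regular $G$) to the nearly regular graphs of interest. One needs either to restrict to a regular sub-class and treat the statement as deterministic there, or to invoke a perturbation argument using $(H1)$ to show that the projection of $\mathbf{1}_I$ onto $v_1$ is indeed $\varrho \sqrt{N}\,(1+o(1))$, and similarly that $\|P_I v_1\|^2 \approx \varrho$. Absorbing these errors into the constants so that the inequalities retain their clean form is the main technical issue; the remaining calculations are routine Cauchy--Schwarz.
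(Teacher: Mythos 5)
There is a genuine gap, and you have in fact located it yourself: the replacement of the Perron eigenvector $v_1$ by $\mathbf{1}/\sqrt{N}$ is not a routine perturbation to be ``absorbed into constants'' --- it is the entire content of the lemma, and your proof only establishes the statement for exactly regular graphs. Note first that the lemma is stated (and invoked in Section 5) as a deterministic fact about an arbitrary simple graph; condition $(H1)$ is a hypothesis of Theorem \ref{synchronous}, not of this lemma, so you cannot appeal to it here without changing the statement. And the step is genuinely necessary, not a technicality: without flatness of $v_1$, Cauchy--Schwarz on the rank-one term only gives $\lambda_1(\mathbf{1}_J^Tv_1)(v_1^T\mathbf{1}_I)\le \lambda_1\sqrt{\varrho\varrho'}\,N$, which is much weaker than the claimed main term $\varrho\varrho'\lambda_1 N$. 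Indeed, for the graph consisting of a clique $K_m$ with $m=\sqrt{N}$ plus $N-m$ isolated vertices (where $\lambda_1=m-1$, $\kappa=1$), taking $I=J$ to be the clique gives $e(J,I)\sim N/2$ while $[\varrho\varrho'\lambda_1+\sqrt{\varrho\varrho'}\kappa]N=\mathcal{O}(\sqrt{N})$; so some regularity hypothesis is built into Koml\'os--Paturi's formulation, and any honest proof must make that hypothesis explicit and do quantitative work at exactly the point you defer. Your sketch of how $(H1)$ would imply $\|P_Iv_1\|^2\approx\varrho$ and $v_1^T\mathbf{1}_I\approx\varrho\sqrt{N}$ is only a hope; $(H1)$ controls degrees against $\lambda_1$, and extracting entrywise flatness of $v_1$ from it is not supplied and is not routine.

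A second, independent defect: your argument for the eigenvalue bound assumes $I\cap J=\emptyset$, both in the identity $A_H=P_IAP_J+P_JAP_I$ (which double counts the $I\cap J$ block when the sets overlap; the correct matrix is $P_IAP_J+P_JAP_I-P_{I\cap J}AP_{I\cap J}$) and in the orthogonality of $y=P_Ix$ and $z=P_Jx$. But the paper applies the lemma precisely to overlapping and even identical sets --- it uses $\lambda_{\{J,J\}}\le 2\varrho'\lambda_1+2\kappa$ and $\lambda_{J\cap I}$ in the third and fourth noise terms --- so the disjoint case does not suffice for the intended use. For comparison: the paper itself gives no proof of this lemma; it quotes \cite{Komlos1993} and notes that the argument there rests on estimating quadratic forms by eigenvalues \emph{together with Cauchy's interlacing theorem}, which is the tool that handles submatrices and overlapping index sets directly, and it explicitly warns that the proof ``is not trivial.'' Your expander-mixing-lemma decomposition is the natural first attempt and is correct in the $d$-regular, disjoint case (there it even yields the slightly stronger bound $\sqrt{\varrho\varrho'}\lambda_1+\kappa$), but as written it does not prove the lemma in the generality in which the paper states and uses it.
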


The proof of this lemma basically involves estimating quadratic forms
by their eigenvalues together with Cauchy's interlacing theorem for
eigenvalues of matrices. However, it is not trivial (see the proof in
\cite{Komlos1993}).

\section{Proof of the main result}\label{sec5}

We are now ready to begin with the proof of Theorem \ref{synchronous}.
We first present an important lemma that determines the behavior of the
system for one step of the synchronous dynamics, more precisely it
controls, how many errors are corrected by one step of the dynamics.

\begin{lemma}\label{mainlemma}
Recall that $m$ denotes the maximum degree of the random graph $G$ in
question and let
\[
\varrho_0=\exp\biggl(-c_2\frac{\lambda_1}{\kappa+{Mm}/{\lambda_1}}
\biggr),
\]
for some constant $c_2>0$.
If $M\le c \lambda_1$ for some constant $c>0$, there exists $\varrho
_1\in(0,\frac{1}2)$ and a constant $c_1>0$, such that for all $\varrho
\in[\varrho_0,\varrho_1]$ we have
\[
P\bigl[\forall\mu\in\{1,\ldots,M\}, \forall x\in\S\bigl(\xi^\mu,
\varrho N\bigr) \dvt d_H\bigl(T(x), \xi^\mu\bigr)\leq f(
\varrho) N\bigr] \ge1-\varepsilon_N,
\]
where
\[
f(\varrho)= \max\biggl\{c_1 \varrho \biggl(
\frac{\kappa}{\lambda_1}\biggr)^2, c_1 \varrho h(
\varrho),c_1 \frac{\kappa}{\lambda_1} h(\varrho),c_1 \varrho
\biggl(\frac{M\kappa}{(\lambda_1)^2} \log\biggl(\frac{1}\varrho\biggr)
\biggr)^{2/3}, \varrho _0\biggr\}\leq\varrho,
\]
$\varepsilon_N \ge0$, $\varepsilon_N\rightarrow0$ as $N\rightarrow
+\infty$ and
\[
h(\varrho)= -\varrho\log\varrho- (1-\varrho) \log(1-\varrho)
\]
is the entropy function.
\end{lemma}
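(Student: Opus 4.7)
The plan is to fix $\mu=1$ (via a preliminary union bound over $\mu$ that costs only a factor $M$) and, for a worst-case corrupted input $x$ with error set $I=\{j:x_j\neq\xi_j^1\}$ of size $[\varrho N]$, to control the set of coordinates at which $T(x)$ disagrees with $\xi^1$. Writing $u_j=\xi_j^1 x_j$ (equal to $-1$ on $I$ and $+1$ elsewhere) and $\eta_i^\nu=\xi_i^1\xi_i^\nu$ (which form i.i.d.\ Rademacher variables for $\nu\ge 2,\, i\in V$), the update becomes
$$
\xi_i^1T_i(x)=\sgn(S_i+R_i),\qquad S_i=d_i-2e(i,I),\qquad R_i=\sum_{\nu\ge 2}\sum_j a_{ij}\,u_j\,\eta_i^\nu\eta_j^\nu.
$$
For every candidate set $J\subseteq V$ of size $[\varrho'N]$ I estimate the probability that every $i\in J$ is flipped and union-bound over $I$ and $J$, paying at most $\exp(N(h(\varrho)+h(\varrho')))$. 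Summing the defining inequalities over $J$ reduces the joint event to the one-sided estimate $\sum_{i\in J}R_i\le-\sum_{i\in J}S_i$.

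For the signal, hypothesis $(H1)$ yields $\sum_{i\in J}d_i\ge c_1\lambda_1\varrho'N$, and Lemma~\ref{subgraph} applied to the pair $(J,I)$ gives $e(J,I)\le[\varrho\varrho'\lambda_1+\sqrt{\varrho\varrho'}\,\kappa]N$. Hence
$$
\sum_{i\in J}S_i\ \ge\ (c_1-2\varrho)\lambda_1\varrho'N\ -\ 2\sqrt{\varrho\varrho'}\,\kappa N,
$$
which is positive and of order $\lambda_1\varrho'N$ as soon as $\varrho<c_1/4$ and $\varrho'$ exceeds the first threshold $c_1\varrho(\kappa/\lambda_1)^2$ in $f(\varrho)$.

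For the noise, after symmetrizing the double sum (and splitting edges with both endpoints in $J$ from edges between $J$ and $V\setminus J$) I rewrite $\sum_{i\in J}R_i=\sum_{\nu\ge 2}\sum_{\{i,j\}\in E_H}b_{ij}\eta_i^\nu\eta_j^\nu$, where $b_{ij}\in\{0,\pm1,\pm2\}$ is supported on the subgraph $H$ of edges incident to $J$. Since the $\eta^\nu$ are independent across $\nu$, Theorem~\ref{Ee(tS)} (applied to $M-1$ independent copies, with a harmless factor to absorb the $\pm 2$ coefficients) together with the same optimization as in Corollary~\ref{ExpMarkov} gives
$$
P\!\left[\sum_{i\in J}R_i\le -t\right]\ \le\ \exp\!\left(-\frac{t^2}{2(Ml_H+\lambda_1(H)\,t)}\right),
$$
with $l_H\le m\varrho' N$ and, by the second part of Lemma~\ref{subgraph}, $\lambda_1(H)\le 2[\sqrt{\varrho'}\,\lambda_1+(1-\sqrt{\varrho'})\kappa]$. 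Inserting $t=\sum_{i\in J}S_i\asymp\lambda_1\varrho'N$ and balancing the two terms in the denominator produces either a Gaussian rate of order $\exp(-c\lambda_1^2\varrho'N/(Mm))$ or an exponential rate of order $\exp(-c\lambda_1\varrho'N/\lambda_1(H))$.

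Requiring these rates to dominate the entropy factor $\exp(N(h(\varrho)+h(\varrho')))$ yields the remaining three thresholds in $f(\varrho)$: the Gaussian regime combined with $M\le c\lambda_1/(1)$ (and more usefully the assumed $M\lesssim\lambda_1^2/(m\log N)$) contributes $\varrho'\gtrsim c_1\varrho h(\varrho)$; the $\kappa$-piece of $\lambda_1(H)$ in the exponential regime contributes $\varrho'\gtrsim c_1(\kappa/\lambda_1)h(\varrho)$; and the $\sqrt{\varrho'}\,\lambda_1$-piece, after solving the implicit inequality (with $h(\varrho)\asymp\varrho\log(1/\varrho)$ at small $\varrho$), yields the $(M\kappa/\lambda_1^2\log(1/\varrho))^{2/3}$ term. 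The floor $\varrho_0$ is simply the scale at which the tail estimates cease to beat $\varepsilon_N$ outright. The main obstacle I anticipate is the careful symmetrization of $\sum_{i\in J}R_i$ into the strict quadratic-form shape required by Theorem~\ref{Ee(tS)}, together with the combinatorial bookkeeping needed to isolate each of the four regimes cleanly and verify that the assumed $\varrho_1<1/2$ makes all thresholds $\le\varrho$ so that $f(\varrho)\le\varrho$ on $[\varrho_0,\varrho_1]$.
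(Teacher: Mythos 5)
Your overall skeleton does match the paper's: a union bound over $\mu$ and over the sets $I,J$, the same signal lower bound $\sum_{i\in J}S_i\ge C_1\lambda_1|J|$ obtained from $(H1)$ together with the edge-count part of Lemma \ref{subgraph} (which is indeed the source of the threshold $c_1\varrho(\kappa/\lambda_1)^2$), and tail estimates via the Komlos--Paturi moment generating function. The gap is in your treatment of the noise, and it is genuine, in two related places. First, Theorem \ref{Ee(tS)} is stated only for the unsigned form $\sum_{\{i,j\}\in E}X_iX_j$; your lumped form $\sum_{\{i,j\}\in E_H}b_{ij}\eta_i^\nu\eta_j^\nu$ with $b_{ij}\in\{0,\pm1,\pm2\}$ is not covered by it, and ``a harmless factor'' does not dispose of the signs: the whole difficulty of that theorem is the dependence among the summands, and for a signed quadratic form the relevant spectral quantity is no longer $\lambda_1(H)$ of the unsigned subgraph. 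The paper sidesteps this exactly by decomposing $E(J,V)-2E(J,I)$ into the four \emph{unsigned} edge sets $E\{J,V\}$, $E\{J,I\}$, $E\{J,J\}$, $E\{J\cap I,J\cap I\}$, splitting the signal budget as $\gamma_1+2\gamma_2+\gamma_3+\gamma_4=1$, and applying Theorem \ref{Ee(tS)} and Corollary \ref{ExpMarkov} verbatim to each piece (the minus signs are handled by $E[e^{-tS}]\le E[e^{tS}]$). You flag this symmetrization as the main obstacle, but it is not merely bookkeeping: it is the step your single-quadratic-form setup cannot perform.

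Second, and more decisively, keeping all the noise in one tail bound forces you to pair the worst edge count, $l_H\le m\varrho'N$, with the worst entropy, $h(\varrho)+h(\varrho')$. In your Gaussian regime this produces the condition $\varrho'\gtrsim (Mm/\lambda_1^2)\,h(\varrho)$, which is \emph{not} among the terms of $f(\varrho)$ and cannot be absorbed into them: under the lemma's only hypothesis $M\le c\lambda_1$, on the complete graph ($m\asymp\lambda_1\asymp N$, $M\asymp N$ allowed) this reads $\varrho'\gtrsim h(\varrho)>\varrho$, contradicting the claimed $f(\varrho)\le\varrho$; and even importing the theorem's $M\asymp\lambda_1^2/(m\log N)$ (which is not a hypothesis of the lemma) it gives $\varrho'\gtrsim h(\varrho)/\log N$, which dominates $c_1\varrho h(\varrho)$ whenever $\varrho\ll 1/\log N$ --- a regime genuinely traversed, since $\varrho_0$ can be as small as $N^{-c}$ --- and would destroy the $\mathcal{O}(\log\log N)$ iteration count of Lemma \ref{sequencelemma}. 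The paper's four-way split is precisely what decouples edge counts from entropy: the $I$-independent pieces $S_1^\mu(J)$ and $S_3^\mu(J)$ carry the large edge counts ($m|J|$, resp.\ $(\varrho'\lambda_1+\kappa)\varrho'N$) but pay only the $\exp(h(\varrho')N)$ union bound over $J$ (this is where the floor $\varrho_0$ actually comes from), while the $I$-dependent pieces $S_2^\mu$ and $S_4^\mu$ pay the full $\exp(2h(\varrho)N)$ but have the small edge count $e\{J,I\}\le(\varrho\varrho'\lambda_1+\sqrt{\varrho\varrho'}\,\kappa)N$ from Lemma \ref{subgraph}, which is what yields the $c_1\varrho h(\varrho)$ and $c_1\varrho\,(M\kappa\lambda_1^{-2}\log(1/\varrho))^{2/3}$ thresholds. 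Without this decoupling your argument does not establish the stated $f(\varrho)$, so you should restructure the noise estimate along the paper's decomposition rather than as a single quadratic form.
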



%
\begin{pf} 
This lemma is of central importance for our main result. However, its
proof is rather technical. Let us therefore first describe its basic idea.

To this end, recall that it suffices to prove that
%
\begin{equation}
\label{prob1} \sum_{\mu=1}^M P\bigl[\exists
x\in\S\bigl(\xi^\mu, \varrho N\bigr) \dvt d_H\bigl(T(x),
\xi^\mu \bigr)> f(\varrho)N\bigr] \le\varepsilon_N.
\end{equation}
To simplify notation, we can assume that the fundamental memory in
question is $\xi^1$.

Now assume that we start with a corrupted input (i.e., a corrupted pattern)
$x\in\{-1,1\}^N$ such that $d_H(\xi^1,x)= \varrho N$.
Let $I$ be the set of coordinates in which $x$ and $\xi^1$ differ.
Let $T(x)$ be the vector resulting after one step of the parallel
dynamics, and $J$ be the set of coordinates in which $T(x)$ and $\xi
^1$ differ.
Now define the weight matrix $W$ as $W=(w_{ij})$ and
\[
w_{ij}=a_{ij}\sum_{\nu=1}^M
\xi^\nu_i \xi^\nu_j.
\]
Then, since $\xi^1$ is not properly reconstructed for the coordinates
$j \in J$,
for all $j\in J$, we have $ \xi_j^1 (W x)_j \leq0$, which implies
$ \sum_{j\in J} \xi_j^1 (W x)_j \leq0$.

The idea is now to analyze the contributions to $\sum_{j\in J} \xi_j^1
(W x)_j$. Similar to what we said in the analysis of the dynamics $T_i$
in Section~\ref{sec2}, there is a ``signal term''
stemming from the closeness of $x$ to $\xi^1$ and there are noise terms
from the influence of the other patterns. We will first show that the
signal term grows at least linearly in $|J|$. On the other hand, we
are also able to give an upper bound on the influence of the random
noise terms that are also controlled by the size of $I$ and $J$. While
all these computations are relatively straight forward in the Hopfield
model on the complete graph, the estimates become much more involved on
a general graph. The key observation is that we are able to control the
probability to find sets $I$ and $J$ with the above properties with the
help of the spectrum of the adjacency matrix (using the results of the
previous section). Technically to this end, we have to split up the
noise terms according to where the vertices $i$ in $\sum_{j\in J} \sum_I \xi_j^1 a_{ij}\xi^\mu_i \xi^\mu_j$
come from.
The bottom line is, that if $|J|$ is too large, the probability to find
sets $I$, with $|I|=\varrho N$ and $J$ (such that $\xi^1$ is not
reconstructed correctly on $J$ when starting with an $x$ differing from
$\xi^1$ in the coordinates $I$) converges to 0 -- even when being
multiplied by the number of patterns $M$, if $M$ is of the given size
(cf. equation \eqref{centest} below).

Let us now carry out this idea.

For later use, set
\[
S^\mu(J,I) = \sum_{j\in J}\sum
_{k=1}^N a_{jk} \xi_j^1
\xi_j^\mu \xi _k^\mu
x_k
\]
and
\[
S(J,I)= \sum_{\mu=1}^M S^\mu(J,I)=:
\sum_{j\in J} \xi_j^1 (W
x)_j.
\]

Observe that, if the patterns are chosen i.i.d. with i.i.d. coordinates
their typical distance is $N/2 \pm\operatorname{const.} \sqrt N$. This in
turn implies that,
if $\varrho<1/2$ and $d_H(x, \xi^1)=\varrho N$, then $x$ tends to be
closer to $\xi^1$ than to any other pattern, and $S^1(J,I)$ will be the
dominating term in $S(J,I)$.
We will first give a lower bound for $S^1(J,I)$. We can rewrite
$S^1(J,I)$ as
%
%
\[
S^1(J,I) = \sum_{j\in J}\sum
_{k=1}^N a_{jk} \xi_k^1
x_k = \sum_{j\in J} \bigl(e(j,\bar I) -
e(j,I) \bigr)= e(J,V) - 2 e(J,I),
\]
where again we use the notation $e(J,I)$ and $e(j,I)$, to denote the
number edges going from the set $J$ to the set $I$, or, respectively,
from the vertex $j$ to the set $I$. Moreover, $\bar I$ denotes the
complement of the set $I$ in $V$.

Under the assumption of hypothesis (H1) and with the help of Lemma
\ref{subgraph}, we have for all $I$ and $J$,
\begin{eqnarray*}
S^1(J,I)& \ge& c_1
\lambda_1 |J| - 2 \biggl(|I| |J| \frac
{\lambda
_1}N + \sqrt{|I| |J|}
\kappa\biggr)
\\
& = &\lambda_1 |J| \biggl( c_1 - 2 \varrho- 2 \sqrt{
\frac
{\varrho
}{\varrho'}} \frac{\kappa}{\lambda_1}\biggr),
\end{eqnarray*}
where $\varrho' = \frac{|J|}N$.
If we assume that $ \varrho'\ge c_2 \varrho (\frac
{\kappa
}{\lambda_1})^2$ for some $c_2>0$ large enough, and $\varrho<\varrho_1$
for some $\varrho_1\in(0,1/2)$ small enough, we get
%
\begin{equation}
\label{lowerboundS1} S^1(J,I)\ge C_1 \lambda_1 |J|,
\end{equation}
for some constant $C_1\in(0,1)$.

For $\mu\ge2$, we compute
\begin{eqnarray*}
S^\mu(J,I)&=& \sum
_{(j,k)\in E(J,\bar I)} u^\mu_ju_k^\mu-
\sum_{(j,k)\in E(J,I)} u^\mu_ju_k^\mu
\\
&=& \sum_{(j,k)\in E(J,V)} u^\mu_ju_k^\mu-2
\sum_{(j,k)\in
E(J,I)} u^\mu_ju_k^\mu,
\end{eqnarray*}
where $u_i^\mu=\xi^1_i\xi_i^\mu$, for all $i=1,\ldots,N$ and $\mu
=1,\ldots, M$.
To apply the results for the moment generating function of quadratic
forms introduced in Theorem \ref{Ee(tS)} and Corollary \ref{ExpMarkov},
we need to rewrite these sums over ordered pairs of vertices as sums
over unordered pairs. We have
\[
E(J,V) = E(J,J) + E(J, \bar J) = 2 E\{J,J\} + E\{J, \bar J\}= E\{J,V\} + E\{J, J
\},
\]
where for $K,L \subset V$ $E(K,L)$ is the edges set of the directed
graph between the sets $K$ and $L$ induced by our original graph.
Likewise, $E\{K,L\}$ denotes the corresponding set of undirected edges.
In the same way, we obtain
\begin{eqnarray*} E(J,I)&=& E(J\cap\bar I, J\cap I) + E(J\cap I,J\cap
I) + E(J, I\cap \bar J)
\\
&= &E\{J\cap\bar I, J\cap I\} + 2 E\{J\cap I,J\cap I\} + E\{J, I\cap \bar J\}
\\
&=& E\{J, I\} + E\{J\cap I,J\cap I\}.
\end{eqnarray*}
Eventually,
\[
E(J,V) - 2 E(J,I)= E\{J,V\} - 2 E\{J, I\} + E\{J, J\} - 2 E\{J\cap I,J\cap I\}.
\]

We want to prove that for $\varrho'$ larger than $f(\varrho)$
we have that
%
\begin{equation}
\label{centest} M P\bigl[\exists I, |I|= \varrho N,\exists J, |J|=
\varrho' N, S(J,I) <0\bigr] \longrightarrow0,
\end{equation}
as $N\rightarrow+\infty$.

To this end, set
\begin{eqnarray*}
S_1^\mu(J)&=& \sum
_{(j,k)\in E\{J,V\}} u^\mu_ju_k^\mu,\qquad
S_2^\mu (J,I)= \sum_{(j,k)\in E\{J,I\}}
u^\mu_ju_k^\mu,
\\
S_3^\mu(J)&=& \sum_{(j,k)\in E\{J,J\}}
u^\mu_ju_k^\mu\quad \mbox{and}\quad S_4^\mu(J,I)= \sum
_{(j,k)\in E\{J\cap I,J\cap I\}} u^\mu_ju_k^\mu.
\end{eqnarray*}
Then
\[
S(J,I) = S^1(J,I)+ \sum_{\mu=2}^M
S^\mu_1(J) -2 \sum_{\mu=2}^M
S^\mu _2(J,I)+ \sum_{\mu=2}^M
S^\mu_3(J)- \sum_{\mu=2}^M
S^\mu_4(J,I).
\]
Let $\gamma_1, \gamma_2, \gamma_3, \gamma_4 \ge0$, such that
$\gamma
_1+2\gamma_2+\gamma_3+\gamma_4=1$.

We will consider the four sums separately.
First, using (\ref{lowerboundS1}), we have
\begin{eqnarray*}
&&P\Biggl[\exists I, |I|= \varrho N,\exists J, |J|=
\varrho' N, \sum_{\mu
=2}^M
S^\mu_1(J) <-\gamma_1 S^1(J,I)
\Biggr]
\\
&&\quad \leq%
\sum_{J \dvt |J|= \varrho' N} P\Biggl[\sum
_{\mu=2}^M S^\mu_1(J) <-
\gamma_1 C_1 \lambda_1 |J|\Biggr].
\end{eqnarray*}

Given the vector $\xi^1=(\xi_i^1)_{i=1,\ldots, N}$, the random
variables $(u_i^\mu)_{i=1,\ldots, N}^{\mu=2,\ldots, M}$ are
conditionally independent and uniformly distributed on $\{-1,+1\}$. As
the estimates we will get for the conditional probabilities and the
moment generating function will not depend on the choice of $\xi^1$,
they will be true also for the unconditional probabilities.

Given the vector $\xi^1$, the random variables $S^\mu_1(J), \mu
=2,\ldots
, M$, are independent. Similar to the estimate of Corollary \ref
{ExpMarkov}, we obtain
\[
P\Biggl[\sum_{\mu=2}^M S^\mu_1(J)
<-\gamma_1 C_1 \lambda_1 |J|\Biggr] \leq \exp
\biggl(-\frac{1}2\frac{\gamma_1 C_1 \lambda_1 |J|}{\lambda_J+
{M
e\{J,V\}}/{(\gamma_1 C_1 \lambda_1 |J|)}} \biggr),
\]
where $\lambda_J=\lambda_1(E\{J,V\})$ is the largest eigenvalue of the
graph determined by the undirected edges in $E\{J,V\}$.
Using Lemma \ref{subgraph}, we have
\[
\lambda_J \le2\bigl[\sqrt{\varrho'}
\lambda_1 + \bigl(1-\sqrt{\varrho'} \bigr) \kappa\bigr],
\]
and
moreover, $e\{J,V\}\leq e(J,V) \le m |J|$ is trivially true.
We deduce that
\[
P\Biggl[\sum_{\mu=2}^M S^\mu_1(J)
<-\gamma_1 C_1 \lambda_1 |J|\Biggr] \leq \exp
\biggl(-\frac{\gamma_1 C_1}2 \frac{\varrho'N}{2\sqrt{\varrho'}+2{\kappa
}/{\lambda_1}+ {Mm}/{(\gamma_1 C_1(\lambda_1)^2)}} \biggr).
\]

Now there are ${N\choose|J|}$ ways to choose the set $J$, and by
Stirling's formula
\[
\pmatrix{N
\cr
|J|}\leq\exp\bigl(h\bigl(\varrho'\bigr)N\bigr),
\]
where
\[
h(x)= -x \log x -(1-x) \log(1-x)
\]
is the entropy function introduced above.

Using
$h(\varrho') \leq-2 \varrho' \log(\varrho')$, we obtain that
\begin{eqnarray*}
&& \sum_{J \dvt |J|= \varrho' N} P\Biggl[\sum
_{\mu=2}^M S^\mu_1(J) <-
\gamma_1 C_1 \lambda_1 |J|\Biggr]
\\
&&\quad\le \exp \biggl(-2 \varrho'N \biggl(\frac{\gamma_1 C_1}4
\frac
{1}{2\sqrt
{\varrho'}+2{\kappa}/{\lambda_1}+ {Mm}/{(\gamma_1
C_1(\lambda
_1)^2)}} + \log\bigl(\varrho'\bigr) \biggr) \biggr).
\end{eqnarray*}
The exponent is negative, if
\[
\frac{\gamma_1 C_1}4 \frac{1}{2\sqrt{\varrho'}+2{\kappa
}/{\lambda
_1}+ {Mm}/{(\gamma_1 C_1(\lambda_1)^2)}} + \log\bigl(\varrho'\bigr)
>0,
\]
which is true if
%
\begin{equation}
\label{firstcond} \frac{\gamma_1 C_1}{8} \frac{1}{2\sqrt{\varrho'}} + \log\bigl(\varrho
'\bigr) >0,
\end{equation}
as well as
%
\begin{equation}
\label{secondcond} \frac{\gamma_1 C_1}{8}\frac{\lambda_1}{2{\kappa}+
{Mm}/{(\gamma
_1C_1\lambda_1)}}+ \log\bigl(
\varrho'\bigr) >0.
\end{equation}
This gives a first bound on $f(\varrho)$ in the sense, that if
$\varrho
'$ is so large, then we will have small probabilities to find the
corresponding sets $I$ and $J$.

Now, there exists a $\varrho_1\in(0,0.1) $, such that the first
condition \eqref{firstcond} is true if $\varrho'<\varrho_1$.
The second condition \eqref{secondcond} is true if
\[
\varrho' > \exp\biggl(- c \frac{\lambda_1}{2 \kappa+ {Mm}/{(\gamma
_1C_1\lambda_1})}\biggr),
\]
where $c= \frac{\gamma_1C_1}{8}$.
This implies that, if there exists a constant $c_2>0$ such that
\[
\varrho' \ge\varrho_0:= \exp\biggl(- c_2
\frac{\lambda_1}{ \kappa+
{Mm}/{\lambda_1}}\biggr),
\]
then \eqref{secondcond} is true.

For the second term, we have
\begin{eqnarray*}
&&P\Biggl[\exists I, |I|= \varrho N,\exists J, |J|=
\varrho' N, \sum_{\mu=2}^M
S^\mu_2(J,I) >\gamma_2 S^1(J,I)
\Biggr]
\\
&&\quad\leq \sum_{I \dvt |I|= \varrho N}\sum_{J \dvt |J|= \varrho' N}
P\Biggl[\sum_{\mu=2}^M S^\mu_2(J,I)
>\gamma_2 C_1 \lambda_1 |J|\Biggr]
\\
&&\quad\leq\sum_{I \dvt |I|= \varrho N}\sum_{J \dvt |J|= \varrho' N}
\exp \biggl(-\frac{1}2\frac{\gamma_2 C_1 \lambda_1 |J|}{\lambda_{\{
J,I\}
}+ {M e\{J,I\}}/{(\gamma_2 C_1 \lambda_1 |J|)}} \biggr),
\end{eqnarray*}
where $\lambda_{\{J,I\}}=\lambda_1(E\{J,I\})$ is the largest eigenvalue
of the graph determined by the undirected edges in $E\{J,I\}$.
Using Lemma \ref{subgraph}, we get
\[
\lambda_{\{J,I\}} \le2\bigl[\sqrt{\varrho\varrho'}
\lambda_1 + \kappa\bigr]
\]
and
\[
e\{J,I\} \le\bigl(\varrho\varrho' \lambda_1 +\sqrt{
\varrho\varrho '}\kappa\bigr) N,
\]
which implies
\begin{eqnarray*}
&&P\Biggl[\sum_{\mu=2}^M S^\mu_2(J)
>\gamma_2 C_1 \lambda_1 |J|\Biggr] \\
&&\quad\leq\exp
\biggl(-\frac{\gamma_2 C_1}2 \frac{\varrho' N}{2\sqrt{\varrho\varrho
'}+2
{\kappa}/{\lambda_1}+{M\varrho}/{(\gamma_2 C_1\lambda_1)} +
({M\kappa}/{(\gamma_2 C_1(\lambda_1)^2)})\sqrt{{\varrho}/{\varrho
'}}} \biggr).
\end{eqnarray*}
There are ${N\choose|I|} {N\choose|J|} $ ways to choose the sets $I$
and $J$ and
\[
\pmatrix{N
\cr
|I|} \pmatrix{N
\cr
|J|} \leq\exp(\bigl(h(\varrho )+h\bigl(
\varrho'\bigr)N\bigr)\leq \exp\bigl(2 h(\varrho) n\bigr),
\]
as we assume that $\varrho'\le\varrho\le1/2$.
These considerations yield that
\[
P\Biggl[\exists I, |I|= \varrho N,\exists J, |J|= \varrho' N, \sum
_{\mu=2}^M S^\mu_2(J,I)
>\gamma_2 S^1(J,I)\Biggr]
\]
becomes small, once the condition
\[
\frac{\gamma_2 C_1}2 \frac{\varrho'}{2\sqrt{\varrho\varrho
'}+2
{\kappa}/{\lambda_1}+{M\varrho}/{(\gamma_2 C_1\lambda_1)} +
({M\kappa}/{(\gamma_2 C_1(\lambda_1)^2)})\sqrt{{\varrho}/{\varrho
'}}} > 2 h(\varrho),
\]
is satisfied.
This is true if
\[
\gamma_2 C_1 \frac{\varrho'}{4\sqrt{\varrho\varrho'} } > 8 h(\varrho),\qquad
 \gamma_2 C_1 \frac{\varrho'}{4 {\kappa}/{\lambda_1}} >
8 h(\varrho), \qquad
 \frac{\gamma_2 C_1}2 \frac{\varrho'}{{M\varrho
}/{(\gamma
_2C_1\lambda_1)} } > 8 h(
\varrho)
\]
and
\[
\frac{\gamma_2 C_1}2\frac{\varrho'}{ ({M\kappa}/({\gamma_2
C_1(\lambda
_1)^2}))\sqrt{{\varrho}/{\varrho'}}} > 16 \varrho\log\biggl(
\frac{1}\varrho \biggr)\ge8 h(\varrho).
\]
From here, we obtain the
four conditions
\[
\varrho'> C^2 \varrho h(\varrho)^2,\qquad
\varrho'> C \frac{\kappa
}{\lambda_1} h(\varrho),\qquad \varrho'>
C' \frac{M}{\lambda
_1}\varrho h(\varrho)
\]
and
\[
\varrho' \ge\varrho\biggl(\frac{2} {C'}\frac{M\kappa}{(\lambda_1)^2}
\log \biggl(\frac{1}\varrho\biggr)\biggr)^{2/3},
\]
where $C=\frac{32}{\gamma_2 C_1 }$ and $C'=\frac{16}{(\gamma_2 C_1)^2}$.

For the third term, we have
\begin{eqnarray*} &&P\Biggl[\exists I, |I|= \varrho N,\exists J, |J|=
\varrho' N, \sum_{\mu=2}^M
S^\mu_3(J,J) <-\gamma_3 S^1(J,I)
\Biggr]
\\
&&\quad\leq \sum_{J \dvt |J|= \varrho' N} P\Biggl[\sum
_{\mu=2}^M S^\mu _3(J,J) <-
\gamma_3 C_1 \lambda_1 |J|\Biggr]
\\
&&\quad\leq\sum_{J \dvt |J|= \varrho' N} \exp\biggl(-\frac{1}2
\frac
{\gamma
_3 C_1 \lambda_1 |J|}{\lambda_{\{J,J\}}+ {M e\{J,J\}}/{(\gamma_3
C_1 \lambda_1 |J|)}}\biggr),
\end{eqnarray*}
where $\lambda_{\{J,J\}}=\lambda_1(E\{J,J\})$ is the largest eigenvalue
of the graph determined by the undirected edges in $E\{J,J\}$.
Using Lemma \ref{subgraph}, we have
\[
\lambda_{\{J,J\}} \le2 \varrho' \lambda_1 + 2
\kappa
\]
and $e\{J,J\}\leq(\varrho' \lambda_1 + \kappa) \varrho' N$, and we
obtain as for the previous terms
\begin{eqnarray*}
&&\exp \biggl(-\frac{1}2\frac{\gamma_3 C_1
\lambda_1 |J|}{\lambda_{\{J,J\}}+ {M e\{J,J\}}/{(\gamma_3 C_1
\lambda_1 |J|)}}
\biggr) \\
&&\quad\le \exp \biggl(-\frac{\gamma_3
C_1}2 \frac{ \varrho' N}{(2+{M}/{(\gamma_3 C_1 \lambda
_1)})\varrho'
+({\kappa}/{\lambda_1})(2+{M}/{(\gamma_3 C_1 \lambda_1)})
} \biggr).
\end{eqnarray*}
There are $ {N\choose|J|} $ ways to choose the set $J$.
From this, we see that
\[
P\Biggl[\exists I, |I|= \varrho N,\exists J, |J|= \varrho' N, \sum
_{\mu=2}^M S^\mu_3(J,J)
<-\gamma_3 S^1(J,I)\Biggr]
\]
becomes small, if
the condition
\[
\frac{\gamma_3 C_1}{2(2+{M}/{(\gamma_3 C_1 \lambda_1)})} \frac{1}{1
+ {\kappa}/{(\lambda_1\varrho')}} > h\bigl(\varrho'\bigr),
\]
is fulfilled,
which is true if
%
\begin{equation}
\label{nextconds} h\bigl(\varrho'\bigr)< C \quad\mbox{and}\quad h\bigl(
\varrho'\bigr)< C \varrho' \frac{\lambda_1}{\kappa}\qquad \mbox{where }
C=\frac{\gamma_3 C_1}{4(2+{M}/{(\gamma_3 C_1 \lambda_1)})}.
\end{equation}

As we assume that $M\leq c \lambda_1$,
there exists a $\varrho_2(\gamma_3, C_1)\in(0,0.1) $, such that the
first inequality in \eqref{nextconds} is true if $\varrho'<\varrho_2$.

Using the bound
$h(\varrho') \leq-2 \varrho' \log(\varrho')$ again, we get that there
exists $c>0$ such that the second condition in \eqref{nextconds} is
true if
\[
\varrho'> \exp\biggl(-c \frac{\lambda_1}{\kappa}\biggr).
\]

For the fourth term, we have
\begin{eqnarray*}
&&P\Biggl[\exists I, |I|= \varrho N,\exists J, |J|=
\varrho' N, \sum_{\mu=2}^M
S^\mu_4(J,I) >\gamma_4 S^1(J,I)
\Biggr]
\\
&&\quad\leq \sum_{I \dvt |I|= \varrho N}\sum_{J \dvt |J|= \varrho' N}
P\Biggl[\sum_{\mu=2}^M S^\mu_4(J,I)
>\gamma_4 C_1 \lambda_1 |J|\Biggr]
\\
&&\quad\leq\sum_{I \dvt |I|= \varrho N}\sum_{J \dvt |J|= \varrho' N}
\exp\biggl(-\frac{1}2\frac{\gamma_4 C_1 \lambda_1 |J|}{\lambda_{J\cap I}+
{M e\{J\cap I,J\cap I\}}/{(\gamma_4 C_1 \lambda_1 |J|})}\biggr),
\end{eqnarray*}
where $\lambda_{J\cap I}=\lambda_1(E\{J\cap I,J\cap I\})$ is the
largest eigenvalue of the graph determined by the undirected edges in
$E\{J\cap I,J\cap I\}$.

Using Lemma \ref{subgraph} and assuming that $\varrho'\le\varrho$, we
have $\lambda_{J\cap I} \le2 \varrho' \lambda_1 + 2 \kappa$ and
$e\{
J\cap I,J\cap I\}\leq(\varrho' \lambda_1 + \kappa) \varrho' N$, which
are the same bounds as for the third term. There are ${N\choose|I|}
{N\choose|J|} $ ways to choose the sets $I$ and $J$ and using again
\[
\pmatrix{N
\cr
|I|} \pmatrix{N
\cr
|J|} \leq\exp(\bigl(h(\varrho )+h\bigl(
\varrho'\bigr)N\bigr)\leq \exp\bigl(2 h(\varrho) N\bigr),
\]
we finally arrive at the same conditions as for the third term, with
possibly a different constant~$C$.

Finally, the various conditions can be summarized as
\begin{eqnarray*}
\varrho'&\ge& c_2 \varrho \biggl(
\frac{\kappa}{\lambda_1}\biggr)^2,\qquad 
 \varrho_1\ge\varrho\ge\varrho' \ge\varrho_0,\qquad
 \varrho'>
C^2 \varrho h(\varrho)^2,
\\
\varrho'&>& C \frac{\kappa}{\lambda_1} h(\varrho),\qquad \varrho
'> C' \frac{M}{\lambda_1}\varrho h(\varrho) \quad\mbox{and}\quad 
\varrho' \ge\varrho\biggl(\frac{2} {C'}
\frac{M\kappa}{(\lambda_1)^2} \log \biggl(\frac{1}\varrho\biggr)
\biggr)^{2/3}.
\end{eqnarray*}
Finally, taking into account all the conditions, we get that (\ref
{prob1}) is true if we choose
\[
f(\varrho)= \max\biggl\{c_1 \varrho \biggl(\frac{\kappa}{\lambda_1}
\biggr)^2, c_1 \varrho h(\varrho),c_1
\frac{\kappa}{\lambda_1} h(\varrho),c_1 \varrho \biggl(\frac{M\kappa}{(\lambda_1)^2}
\log\biggl(\frac{1}\varrho\biggr)\biggr)^{2/3}, \varrho
_0\biggr\}
\]
for some $c_1>0$ large enough and we see that $f(\varrho)\le\varrho$
if $\varrho\in(\varrho_0,\varrho_1)$ with $\varrho_1$ small enough.
\end{pf}

In order to prove the Theorem \ref{synchronous}, we will apply Lemma
\ref{mainlemma} repeatedly until the system attains an original
pattern. Using
\[
\varrho_0=\exp\biggl(-c_2\frac{\lambda_1}{\kappa+{Mm}/{\lambda_1}}\biggr),
\]
we get that the system can attain an original pattern, that is, $\varrho_0
N<1$, only if
\[
\kappa+ \frac{Mm}{\lambda_1} < c_2 \lambda_1/ \log(N)
\]
(which follows from the choice of $M$ made
in Theorem \ref{synchronous}).

To determine the maximal number of steps the synchronous dynamics needs
to converge, we analyze the following sequences.

\begin{lemma}\label{sequencelemma}
Let $(w_n)_{n\in\N}, (x_n)_{n\in\N}, (y_n)_{n\in\N}$ and
$(z_n)_{n\in\N}$ such that
\[
w_0=x_0= y_0 = z_0 = \varrho
\in\biggl[\exp\biggl(-\frac{1}{2c}\frac{\lambda_1}{
\kappa}\biggr), 1/e\biggr]
\]
and
\begin{eqnarray*}
w_{n+1}&=&c w_n \biggl(\frac{\kappa}{\lambda_1}
\biggr)^2,\qquad x_{n+1} = c x_n h(x_n),
\\
y_{n+1}&=& c\frac{\kappa}{\lambda_1} h(y_n) \quad\mbox{and}
\\
z_{n+1} &=& c z_n \biggl(\frac{M\kappa}{(\lambda_1)^2} \log\biggl(
\frac{1}{z_n}\biggr)\biggr)^{2/3},
\end{eqnarray*}
for $n\in\N$ and $c>0$. Let us assume that $\frac{\lambda_1}{\kappa
} >
C_1 \log N$ for some $C_1> 1$ large enough and that $M\le C_2\lambda_1$
for some $C_2>0$.
Then the sequences $(w_n), (x_n), (y_n)$ and $(z_n)$ are decreasing and
there exists $C_3>0$ and
\[
n_0\ge C_3 \max\biggl\{\log\log N, \frac{\log(N)}{\log({\lambda
_1}/{(\kappa\log N)})}
\biggr\}
\]
such that $\max\{w_{n_0}, x_{n_0}, y_{n_0}, z_{n_0}\}<1/N$.
\end{lemma}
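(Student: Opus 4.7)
The plan is to work in the logarithmic scale $L_n := -\log(\text{term}_n)$ for each of the four sequences, show that each $L_n$ grows at a sequence-dependent rate until it exceeds $\log N$, and then combine the four iteration bounds by taking the maximum. The key analytic input throughout is the entropy estimate $h(y) \le -2y\log y$ for $y$ small enough, which follows from $(1-y)\log(1-y) = -y + O(y^2)$.

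The sequence $w_n$ is treated first and is essentially trivial: the recursion is purely geometric, $w_n = \varrho\,(c\kappa^2/\lambda_1^2)^n$, so under the hypothesis $\lambda_1/\kappa > C_1\log N$ the ratio is minute and $O(\log N/\log(\lambda_1/\kappa))$ iterations push $w_n$ below $1/N$, comfortably within the claimed bound. For $x_n$, the entropy bound gives $x_{n+1}\le 2c\,x_n^{2} L_n$ with $L_n=\log(1/x_n)$, equivalently $L_{n+1}\ge 2L_n - \log L_n - \log(2c)$, so $L_n$ roughly doubles at each step and $L_n \ge \log N$ is reached in $O(\log\log N)$ iterations. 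This is the origin of the $\log\log N$ term in the claim.

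For $y_n$, the entropy estimate yields $y_{n+1}/y_n \le 2c(\kappa/\lambda_1)L_n$ with $L_n = \log(1/y_n)$. As long as $y_n\ge 1/N$ one has $L_n\le \log N$, so the ratio is at most $2c/C_1 < 1$ for $C_1$ large, establishing monotonicity. Passing to logs gives $L_{n+1} - L_n \ge \log(\lambda_1/(\kappa\log N)) - \log(2c)$, so $L_n$ gains at least a positive multiple of $\log(\lambda_1/(\kappa\log N))$ per step, yielding an iteration count of order $\log N/\log(\lambda_1/(\kappa\log N))$. The sequence $z_n$ is handled identically with $\beta := M\kappa/\lambda_1^{2}$: the hypothesis $M\le C_2\lambda_1$ gives $\beta\log N\le C_2/C_1$, so the ratio $c(\beta L_n)^{2/3}$ is bounded by $c(C_2/C_1)^{2/3} < 1$ for $C_1$ large, and the log-scale increment $L_{n+1} - L_n \ge (2/3)\log(\lambda_1^{2}/(M\kappa\log N)) - \log c$ is again of order $\log(\lambda_1/(\kappa\log N))$, giving the same iteration bound.

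Setting $n_0$ to the maximum of the four individual iteration counts then proves the claim. The main obstacle is not the log-scale analysis itself, which is routine, but the careful bookkeeping around constants: one must choose $C_1$ large enough (and arrange its interaction with $C_2$) so that \emph{all four} monotonicity conditions hold \emph{simultaneously} and so that each of the log-scale increments remains bounded below by a positive multiple of $\log(\lambda_1/(\kappa\log N))$ throughout the entire range $L_n\in[L_0,\log N]$. A minor subtlety concerns the first step for $x_n$: if $\varrho$ is close to $1/e$ then $L_0\approx 1$ and the doubling regime only kicks in after $L_n$ clears an absolute constant, but this adds only a bounded number of initial steps and does not affect the $O(\log\log N)$ bound.
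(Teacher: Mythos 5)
Your proposal is correct, and for the two ``slow'' sequences it takes a genuinely different (and cleaner) route than the paper. The paper handles $(y_n)$ by unrolling the recursion into an explicit product, $y_n \le a^n y_0 \prod_{i=0}^{n-1}\bigl[\log(1/y_0)+i\log(1/a)\bigr] \le (ab)^n y_0\, n!$, and then invokes Stirling to locate the first $n$ with $y_n<1/N$; the sequence $(z_n)$ is treated by the same product device. You instead track the drift of $L_n=\log(1/\mathrm{term}_n)$ and observe that while $L_n\le\log N$ the increment is bounded below by $\log\bigl(\frac{\lambda_1}{\kappa\log N}\bigr)-O(1)$, which avoids Stirling entirely, makes the appearance of the denominator $\log\bigl(\frac{\lambda_1}{\kappa\log N}\bigr)$ transparent, and builds in the stopping condition $L_n\le\log N$ exactly where the paper has to absorb a $\log\log N$ correction into $-\log a-\log\log N$. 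Your stopping-time argument also sidesteps the paper's fixed-point discussion (convergence of $y_n$ to $e^{-1/a}$), since the drift alone forces $L_n$ past $\log N$; the paper's fixed-point remark instead certifies explicitly that the limit $e^{-1/a}$ lies below $1/N$ when $\lambda_1/\kappa>C_1\log N$. For $(x_n)$ you use $h(x)\le -2x\log x$ to get near-doubling of $L_n$, where the paper uses the cruder $h(x)\le 2\sqrt x$ and the exponent $3/2$; both give $O(\log\log N)$. One caveat, which you share with the paper rather than resolve: for $\varrho$ near $1/e$ and $c$ large, $x_1=c\,x_0 h(x_0)$ need not be smaller than $x_0$, so strict monotonicity of $(x_n)$ on the full stated range of $\varrho$ requires $ch(x_0)<1$; the paper papers over this with ``if $x_0$ is small enough'' (and in the application $\varrho\le\varrho_1$ is chosen small), and your ``bounded number of initial steps'' remark is valid only under the same implicit restriction.
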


\begin{pf}
Let us first consider the sequence $(w_n)$. Iterating $w_{n+1} = a
w_n$, with $a=c (\frac{\kappa}{\lambda_1})^2$, we get trivially $w_n =
a^n w_0$ from which we deduce that $w_n < \frac{1}N$ as soon as $n > c_1
\frac{\log(N)}{\log({\lambda_1}/{\kappa})}$ for some $c_1>0$.

For the sequence $(x_n)$, using $h(x)\leq-2 x \log(x)\leq2 \sqrt{x}$
for $x\in[0,1/2]$, we have
$x_{n+1}\leq(C x_n)^{3/2}$ for some constant $C>0$. Iterating, we get
$x_n \le(C^3 x_0)^{(3/2)^n}$, from which we deduce that
$x_n < \frac{1}N$ if $n\ge c_2 \log\log N$ for some $c_2>0$, if $x_0$
is small enough.

For the sequence $(y_n)$, using again $h(x)\leq-2 x \log(x)$, we have
to iterate the relation
$y_{n+1} = a y_n \log(\frac{1} {y_n})$, with $a=2c\frac{\kappa
}{\lambda_1}$.
If we consider $y_0 \in[\exp(-1/a), \exp(-1)] $,
the inductively defined sequence $y_{n+1} = g(y_n)$ is decreasing and
converges to $\exp(-1/a)$ since the function $g(x)= - a x \log(x)$ is
increasing on the interval $[\exp(-1/a), \exp(-1)]$, $y_1 \le y_0$ and
$\exp(-1/a)$ is the single fixed point of $g$.
Moreover, we have
\begin{eqnarray*} y_{n+2}&=& a^2 y_n
\log\biggl(\frac{1} {y_n}\biggr) \biggl( \log\biggl(\frac{1} {y_n}\biggr)
+ \log \biggl(\frac{1}a\biggr) + \log\biggl(\frac{1}{\log({1}/ {y_n})}\biggr)
\biggr)
\\
&\leq& a^2 y_n \log\biggl(\frac{1} {y_n}\biggr)
\biggl( \log\biggl(\frac{1} {y_n}\biggr) + \log\biggl(\frac{1}a
\biggr) \biggr),
\end{eqnarray*}
if $y_n\le1/e$.
By iteration, if we set $b=\log(\frac{1}{\min\{\varrho, a\}})$, we get
similarly for all $n\in\N$,
\begin{eqnarray*}y_{n}& \leq &a^n
y_0 \prod_{i=0}^{n-1} \biggl[
\log\biggl(\frac{1}{y_0}\biggr) + i \log\biggl(\frac{1}a \biggr)
\biggr]
\\
& \leq&(ab)^n y_0 n!
\\
& \leq& c_3 \biggl(ab \frac{n}e\biggr)^n
\sqrt{n}
\\
& =& c_3 \exp\biggl( n\bigl(\log(a) +\log(b)+ \log(n)-1\bigr) +
\frac{1}2\log( n)\biggr)
\\
& \le& c_3 \exp\bigl(-c_4 \log N \bigl(1+ \mathrm{o}(1)
\bigr)\bigr),
\end{eqnarray*}
for some $c_3>0$, if $n=c_4 \log(N)/(-\log a - \log\log N)$. In
particular, this justifies the hypothesis $\frac{\lambda_1}{\kappa} >
c_5 \log N$ for some $c_5> 1$ large enough.
We therefore see that there exists some $c_6>0$ such that
$\mathrm{e}^{-1/a} \le y_n <\frac{1}N$ for $ n = c_6 \frac{\log(N)}{\log(
{\lambda_1}/{(\kappa\log N)})}$.\

The third sequence can be rewritten as $z_{n+1} = a z_n (\log\frac{1}{z_n})^{2/3}$, with $a=c (\frac{M}{\lambda_1} \frac{\kappa
}{\lambda
_1})^{2/3}$. With the same technique as for $y_n$, we get that the
sequence $(z_n)$ converges to $\exp(-1/a^{3/2})$ and $z_n <1/N$ if
\[
n \ge c_7 {\log(N)}\Big/\log\biggl({\frac{(\lambda_1)^2} {M\kappa\log(N)}}\biggr).
\]

This proves the lemma.
\end{pf}

The combination of the previous considerations and Lemma \ref
{sequencelemma} then yields the Theorem \ref{synchronous}.

\begin{appendix}\label{app}
\section*{Appendix: On the degrees of the Erd\"os--Renyi graph}\label{appA}
To prove the Corollary \ref{GNp}, we need to estimate the minimum and
the maximum degrees of a typical Erd\"os--Renyi graph $G(N,p)$.
The following result could not be found in the literature. We prove in
this appendix the following.

\begin{lemmas} If $G$ be is chosen randomly according to the model
$G(N,p)$, then if $p \gg\frac{\log N}N$,
for a set of realizations of $G$ the probability of which converges to
one as $N \to\infty$, we have
$ m= (1+\mathrm{o}(1)) Np$ and $ \delta= (1+\mathrm{o}(1)) Np$.
\end{lemmas}

\begin{pf}
Let $G$ chosen randomly according to the model $G(N,p)$. The law of the
degree $d_i$ of an arbitrary vertex $i$ of $G$ is the binomial
distribution $B(N,p)$. Hence, using the exponential Markov inequality,
we arrive at the following bound: for $p<a<1$ and $N\ge1$,
\[
P[d_i \ge a N] \leq\exp\bigl(-N H(a,p)\bigr),
\]
where $H$ is the relative entropy or Kullback--Leibler information
\[
H(a,p) = a \log\biggl(\frac{a}p\biggr) + (1-a) \log\biggl(
\frac{1-a}{1-p}\biggr).
\]

If we now set $m=\max_i d_i$ as above, we obtain
\[
P[m \ge a N] \leq\sum_{i=1}^N
P[d_i \ge a N] \le N \exp\bigl(-N H(a,p)\bigr).
\]
If we choose $a=(1+\varepsilon)p$, for some $\varepsilon>0$ such that
$a<1$, we therefore get
\[
P\bigl[m \ge(1+\varepsilon)p N\bigr] \leq N \exp \biggl(-N \biggl(p(1+
\varepsilon) \log (1+\varepsilon)+\bigl(1-(1+\varepsilon)p\bigr)\log\biggl(
\frac{1-(1+\varepsilon
)p}{1-p}\biggr)\biggr) \biggr).
\]
Moreover, we have $(1-(1+\varepsilon)p)\log(\frac{1-(1+\varepsilon
)p}{1-p}) \ge- p \varepsilon$.

Indeed, if we set $q=1-p$ and $u= 1- (1+\varepsilon)p = q -
\varepsilon
p$, the last inequality is equivalent to
$\log( q/u) \le q/u -1$ which is true since $ q/u >1$.
Thus,
\begin{eqnarray*}
P\bigl[m \ge(1+\varepsilon)p N\bigr] &\leq& N \exp\bigl(-Np \bigl((1+\varepsilon)
\log (1+\varepsilon)-\varepsilon\bigr)\bigr)\\
& \leq& N \exp\biggl(-Np
\frac{\varepsilon
^2}2 \bigl(1+\mathrm{o}(1)\bigr)\biggr),
\end{eqnarray*}
if we suppose that $\varepsilon=\mathrm{o}(1)$ as $N\rightarrow\infty$.
Choosing $ \varepsilon= 2 \sqrt{\frac{\log N}{pN}}$, we
have $\varepsilon=\mathrm{o}(1)$ for $p \gg\frac{\log N}N$, and
\[
P\bigl[m \ge(1+\varepsilon)p N\bigr] \rightarrow0 \qquad\mbox{as }N\rightarrow \infty.
\]

Moreover, we have $m\ge\lambda_1$ and $\lambda_1 = (1+\mathrm{o}(1))p N$
(with
probability converging to 1 as $N\rightarrow\infty$), which gives the
result for $m$.

Now, if we set $\delta:=\min_i d_i$, we want to prove that $P[\delta
\ge(1+\varepsilon') pN] \rightarrow1$, as $N\rightarrow\infty$, for
some $\varepsilon'=\mathrm{o}(1)$.
We consider
the complementary graph, that is, the random graph $\overline G$, such that
exactly those edges are missing in a realization of $\overline G$, that
occur in the corresponding realization of the original random graph
$G$. Now the maximum degree $\overline m$ of $\overline G$ and the
minimum degree $\delta$ of $G$ are linked via the relation
$\delta= N-1- \overline m$.

As $\overline G$ is chosen randomly according to the model $G(N,1-p)$,
we have
\[
P\bigl[\overline m \ge(1+\varepsilon) (1-p) N\bigr] \le N \exp\bigl(-N H
\bigl((1+\epsilon ) (1-p),1-p\bigr)\bigr),
\]
for all $\epsilon>0$ such that $(1+\varepsilon)(1-p)<1$.
Now
\[
H\bigl((1+\varepsilon) (1-p),1-p\bigr) = (1+\varepsilon) (1-p) \log (1+
\varepsilon) +(p-\varepsilon+p\varepsilon)\log\biggl(1-\frac{\varepsilon(1-p)}p\biggr).
\]
If we suppose that $\varepsilon= \mathrm{o}(1)$ and $\varepsilon\ll p$, using
the inequality
$\log(1-x) \ge-x - x^2/2-x^3$ for $x\in(0,1/2)$ to bound the last
term, we obtain the estimate
\begin{eqnarray*}
H\bigl((1+\varepsilon) (1-p),1-p\bigr) \ge\frac{\varepsilon^2}{2p} \biggl((1-p) - C
\biggl( \varepsilon+ \frac{\varepsilon}p\biggr) + \mathcal{O}\bigl(p
\varepsilon^2\bigr) \biggr),
\end{eqnarray*}
for some $C>0$ and
\[
P\bigl[\overline m \ge(1+\varepsilon) (1-p) N\bigr] \le\exp\biggl( -N
\frac
{\varepsilon
^2}{2p} \biggl((1-p) - C \biggl(\varepsilon+ \frac{\varepsilon}p\biggr)
+ \mathcal {O}\bigl(p\varepsilon^2\bigr) \biggr) + \log(N)\biggr).
\]
There exists some $c>0$ such that if we choose
$\varepsilon= \sqrt{ \frac{4p}{c(1-p)}\frac{\log(N)}N }$, we get
\[
P\bigl[\overline m \ge(1+\varepsilon) (1-p) N\bigr] \le\exp\biggl( -c N
\frac
{\varepsilon^2}{2p} (1-p)+ \log(N)\biggr) \rightarrow0,
\]
under the conditions
$p \gg\frac{\log N}N$ and $1-p \gg(\frac{\log N}N)^{1/3}$.

Finally, we get $\delta\ge N-1 - (1+ \varepsilon) (1-p)N = (1+\mathrm{o}(1))
Np$, which is the result under these two conditions.

Eventually, we will extend this result for all $p$ such that
$p\rightarrow1$, as $N\rightarrow+\infty$.
As previously, using the exponential Markov inequality, we get the
following bound: for $0<b<p<1$, and $N\ge1$,
\[
P[d_i \le b N] \leq\exp\bigl(-N H(b,p)\bigr).
\]
We set $p=1-a_N$ and $ b= 1- b_N$, for some strictly positive sequences
$(a_N)$ and $(b_N)$ such that
$a_N + b_N \rightarrow0$, as $N\rightarrow\infty$, $a_N \ll b_N$, and
we can restrict to the case
$a_N < (c\frac{\log N}N)^{1/3}$ for some $c>0$.
We get
\begin{eqnarray*}
P[\delta\le b N] &\leq &N \exp\biggl(-N \biggl( (1-b_N) \log\biggl(
\frac
{1-b_N}{1-a_N}\biggr) + b_N \log\biggl(\frac{b_N}{a_N}\biggr)
\biggr)\biggr)
\\
&\leq&\exp\biggl(-N \biggl( b_N \log\biggl(\frac{b_N}{a_N}
\biggr)-2b_N\biggr)+\log(N)\biggr).
\end{eqnarray*}
So, we need to choose $b_N$ such that
\[
b_N \log\biggl(\frac{b_N}{a_N}\biggr) > \frac{\log(N)}{N}.
\]
We have
\[
b_N \log\biggl(\frac{b_N}{a_N}\biggr) > b_N \log
\biggl({b_N}\biggl(\frac{N}{c \log(N)}\biggr)^{1/3}\biggr)>
\frac{\log(N)}{N},
\]
if we choose for instance $b_N =(\frac{\log N}N)^{\gamma}$ with
$\gamma
\in(0,1/3)$.

Finally, we get for all $p\rightarrow1$ that $\delta\ge(1-b_N) N =
(1+ \mathrm{o}(1)) Np$, with probability converging to 1 as $N\rightarrow\infty$.
\end{pf}
\end{appendix}
%


%




\printhistory
\end{document}